%% file: main.tex
\documentclass[11pt]{amsart}

\usepackage{hyperref} 
\usepackage{graphicx}
\usepackage[T1]{fontenc}
\usepackage{mathptmx}
\usepackage{color}
\usepackage{soul}
\usepackage[titletoc]{appendix}
\usepackage{amsmath, amssymb, amsthm}
\usepackage{subcaption}
\usepackage{float}

\DeclareMathOperator{\trace}{trace}
\DeclareMathOperator{\Ad}{Ad}

\DeclareMathOperator{\sech}{sech}

\newcommand{\bbar}{\begin{pmatrix}}
\newcommand{\ebar}{\end{pmatrix}}

\newcommand{\bdm}{\begin{displaymath}}
\newcommand{\edm}{\end{displaymath}}
\newcommand{\beq}{\begin{equation}}
\newcommand{\beqa}{\begin{eqnarray}}
\newcommand{\beqas}{\begin{eqnarray*}}
\newcommand{\eeq}{\end{equation}}
\newcommand{\eeqa}{\end{eqnarray}}
\newcommand{\eeqas}{\end{eqnarray*}}
\newcommand{\dd}{\textup{d}}

\newcommand{\real}{{\mathbb R}}

\newcommand{\glR}{\mathfrak{gl}(2,\real)}
\newcommand{\SLR}{\mathrm{SL}(2,\real)}
\newcommand{\slR}{\mathfrak{sl}(2,\real)}

\newcommand{\pp}{\mathfrak{p}}

\newcommand{\ads}{\mathbb{H}_1^3}
\newcommand{\sff}{\mathrm{I\!I}}
\newcommand{\fff}{\mathrm{I}}

   \newtheorem{theorem}{Theorem}[section]
   \newtheorem{proposition}[theorem]{Proposition}
	  
   \newtheorem{corollary}[theorem]{Corollary}
   \newtheorem{lemma}[theorem]{Lemma}
   \newtheorem{definition}[theorem]{Definition}

 \theoremstyle{remark}
   \newtheorem{example}[theorem]{Example}
   \newtheorem{remark}[theorem]{Remark}

\numberwithin{equation}{section}

\title{Lorentz harmonic maps into the hyperbolic plane and spacelike surfaces in anti-de Sitter 3-space}

\author{Jorge Bravo-Gadea}

\begin{document}

\begin{abstract}
We study the relationship between Lorentz harmonic maps into the hyperbolic plane 
and spacelike surfaces in anti-de Sitter 3-space. Using loop group techniques, 
we develop a DPW-type representation for Lorentz harmonic maps and provide an 
explicit solution of the associated Cauchy problem in terms of a pair of potentials.

We then establish a correspondence between Lorentz harmonic maps and spacelike 
immersions in anti-de Sitter space, identifying conditions under which a harmonic 
map arises as the Gauss map of a surface. In the nondegenerate case, this leads to 
a one-parameter family of spacelike surfaces of constant Gauss curvature, together 
with explicit reconstruction formulas. We also analyze the degenerate case, where 
the Gauss map fails to be an immersion, and show that additional data are required 
to recover the surface.

Finally, we formulate and solve the geometric Cauchy problem for spacelike surfaces 
of constant curvature in anti-de Sitter space, providing a constructive method to 
recover surfaces from prescribed initial data. This establishes a direct link between 
the analytic theory of Lorentz harmonic maps and the geometry of surfaces in Lorentzian 
space forms.
\end{abstract}

\keywords{Lorentz harmonic maps, anti-de Sitter space, loop groups, DPW method, Cauchy problem, constant curvature surfaces}

\subjclass[2020]{Primary 53C43; Secondary 53A10, 58E20, 37K25}

\maketitle
\input{introduction2}
\input{preliminaries}
\input{constructionv2}
\input{cauchy_problem}
\input{surfaces}
\input{parallel_surfaces}

\subsection*{Acknowledgments}
This work was supported by the Independent Research Fund Denmark (DFF), 
grant number 9040-00196B.

\bibliographystyle{plain}
\bibliography{References}

\end{document}

%% file: introduction2.tex
\section{Introduction}

The study of surfaces in three-dimensional homogeneous spaces has long been a central topic 
in differential geometry, with deep connections to integrable systems and harmonic map theory. 
A classical theme is the relationship between geometric properties of an immersion—such as 
constant mean curvature (CMC) or constant Gauss curvature (CGC)—and the analytic behaviour 
of its Gauss map (see, e.g., \cite{MR3726907,MR831039}).

In the Riemannian setting, this interplay is well understood. In Euclidean space, constant mean 
curvature surfaces are characterized by the harmonicity of their Gauss map \cite{ruh1970tension}, 
while surfaces of constant negative curvature are governed by integrable equations such as the 
sine--Gordon equation \cite{bobenko1996discrete}. In hyperbolic space, Bryant showed that surfaces 
of mean curvature one admit a holomorphic representation formula \cite{bryant1987surfaces}, 
revealing a deep connection between surface theory and complex analysis.

These phenomena are part of a broader framework relating differential geometry and integrable 
systems. In particular, harmonic maps into symmetric spaces can be described via loop group 
factorizations \cite{MR1664887,MR900587,MR1630443,MR3087328}, providing a unified approach to the 
construction of surfaces. This perspective has been further developed through the DPW method 
and its generalizations \cite{MR1664887,MR2491604}, which allow one to construct surfaces from 
holomorphic data.

A key feature of this theory is the existence of correspondences between surfaces in different 
space forms. Classical results such as the Lawson correspondence relate minimal surfaces in 
Euclidean space with constant mean curvature surfaces in spheres \cite{lawson1970complete}, 
while transformation theories such as Bäcklund and Darboux transformations generate new 
solutions from given ones and play a central role in integrable surface theory \cite{tenenblat1998transformations,MR581396}. 
These ideas extend to Lorentzian geometry, where surfaces in Minkowski, de Sitter, and anti-de 
Sitter spaces are related through analogous correspondences.

In the Lorentzian setting, spacelike and timelike surfaces with prescribed curvature conditions 
have been extensively studied. In particular, surfaces with Lorentz harmonic Gauss maps provide 
a natural generalization of the classical theory, and their geometric Cauchy problem has been 
analyzed using integrable systems techniques \cite{MR3019511,MR3223570,MR2158167}. These results highlight 
the deep interaction between harmonic map theory and the geometry of surfaces in Lorentzian 
space forms.

In recent years, attention has turned to the study of surfaces in anti-de Sitter 3-space 
$\ads$, which can be realized as the Lie group $\mathrm{SL}(2,\mathbb{R})$ endowed with a 
bi-invariant Lorentzian metric \cite{MR425012}. This Lie group structure makes $\ads$ particularly 
suitable for the application of loop group methods and has led to the development of representation 
formulae and classification results for various classes of surfaces \cite{MR2251429,MR1282222,MR2833568}. 
Further developments have explored the role of the Gauss map and the geometry of surfaces in Lie 
groups more generally \cite{MR3537970,MR3468639}.

In this paper, we investigate the correspondence between Lorentz harmonic maps and spacelike 
surfaces in $\ads$. Our approach is based on a DPW-type construction \cite{MR1664887,MR2491604}, which allows 
us to solve the Cauchy problem for Lorentz harmonic maps into the hyperbolic plane $H^2$ in a 
constructive way. While existence and uniqueness follow from general analytic results such as 
the Cauchy--Kowalevski theorem \cite{MR3012036}, our method provides explicit representation 
formulae that are essential for geometric applications. Similar constructions for constant mean curvature surfaces via loop group methods 
can be found in \cite{https://doi.org/10.48550/arxiv.math/0602570}.

We then apply this construction to the geometry of spacelike surfaces in $\ads$. We show that, 
under suitable conditions, Lorentz harmonic maps arise as Gauss maps of spacelike immersions, 
and we obtain a representation formula for such surfaces. In particular, we establish a 
correspondence between harmonic Gauss maps and spacelike surfaces of constant Gauss curvature, 
providing a unified framework for their construction.

A key feature of our approach is that it allows us to solve the geometric Cauchy problem for 
surfaces of constant curvature in a systematic way. Given appropriate initial data along a curve, 
we construct a unique spacelike immersion realizing these data, thus linking the analytic theory 
of harmonic maps with the geometric theory of surfaces.

Finally, we study the parallel surface construction and its effect on curvature. We show that 
the harmonic Gauss map naturally generates families of associated surfaces, relating constant 
Gauss curvature and constant mean curvature geometries in $\ads$. This can be interpreted as a 
Lorentzian analogue of classical transformation theories for surfaces \cite{tenenblat1998transformations,MR3087328}, 
and allows us to reduce the geometric Cauchy problem for constant mean curvature surfaces to the 
corresponding problem for constant Gauss curvature surfaces.

The paper is organized as follows. In Section 2 we recall the necessary background on $\ads$, 
harmonic maps, and frame theory. In Section 3 we develop the loop group formulation and the 
DPW construction. Section 4 is devoted to the solution of the Cauchy problem. In Section 5 
we study the correspondence with spacelike surfaces and derive the curvature conditions. 
Section 6 addresses the geometric Cauchy problem, and Section 7 discusses parallel surfaces 
and associated families.

%% file: preliminaries.tex
\section{Preliminaries}

\subsection{The hyperbolic plane in $\mathfrak{sl}(2,\mathbb{R})$}

In this section we recall the matrix models for the hyperbolic plane and anti-de Sitter 3-space that will be used throughout the paper. These models are particularly well suited for the application of loop group methods (see, e.g., \cite{MR0719023,MR3468639}).

Let $\glR$ be the space of real $2 \times 2$ matrices. Consider the basis
\[
e_0 = \begin{pmatrix} 1 & 0 \\ 0 & 1 \end{pmatrix}, \quad
e_1 = \begin{pmatrix} 0 & -1 \\ 1 & 0 \end{pmatrix}, \quad
e_2 = \begin{pmatrix} 0 & 1 \\ 1 & 0 \end{pmatrix}, \quad
e_3 = \begin{pmatrix} -1 & 0 \\ 0 & 1 \end{pmatrix}.
\]

We endow $\glR$ with the bilinear form
\[
\langle X, Y \rangle = -\tfrac{1}{2}\operatorname{trace}(X \overline{Y}),
\]
where $\overline{Y}$ denotes the adjoint transpose of $Y$ \cite{MR425012}. With respect to this metric, $\glR$ becomes isometric to $\mathbb{R}^{2,2}$, and one verifies that
\[
\langle X, X \rangle = -\det X.
\]

In particular, the special linear group can be identified as
\[
\mathrm{SL}(2,\mathbb{R}) = \{ X \in \glR \mid \det X = 1 \}
= \{ X \in \glR \mid \langle X, X \rangle = -1 \},
\]
which realizes anti-de Sitter 3-space as a quadric in $\mathbb{R}^{2,2}$ \cite{MR2251429,MR1282222}.

Restricting the metric to the Lie algebra
\[
\slR = \mathrm{Span}\{e_1, e_2, e_3\},
\]
we obtain a Lorentzian inner product
\[
\langle X, Y \rangle = \tfrac{1}{2}\operatorname{trace}(X \bar{Y}),
\]
with respect to which $\slR$ is isometric to Minkowski 3-space $\mathbb{R}^{2,1}$ \cite{MR0719023}.

The hyperbolic plane is then realized as the quadric
\[
H^2 = \{ X \in \slR \mid \langle X, X \rangle = -1 \}.
\]

\subsection{The harmonic equation}

We now recall the notion of Lorentz harmonic maps into the hyperbolic plane in the present setting.

Let $\Omega \subset \mathbb{R}^{1,1}$ be a domain endowed with the standard Lorentzian metric, and let
\[
\Sigma_\sigma = \{ X \in \mathbb{R}^{2,1} \mid \langle X, X \rangle = \sigma \}, \quad \sigma = \pm 1,
\]
be the pseudo-sphere with the induced metric.

Consider a smooth map
\[
N : \Omega \longrightarrow \Sigma_\sigma.
\]
We say that $N$ is \emph{Lorentz harmonic} if it is a critical point of the energy functional associated to the Lorentzian metric on $\Omega$ \cite{MR3726907}. In coordinates, this condition can be expressed as a wave-type equation (see, e.g., \cite{MR3012036}).

Let $(x,y)$ be standard Lorentzian coordinates on $\Omega$. Then $N$ is harmonic if and only if
\[
N_{xx} - N_{yy}
\]
is pointwise parallel to $N$.

For our purposes, it is more convenient to work in null coordinates
\[
s = \tfrac{1}{2}(x+y), \qquad t = \tfrac{1}{2}(x-y).
\]
In these coordinates, the harmonicity condition becomes
\[
N_{xx} - N_{yy} = 4 N_{st},
\]
so that $N$ is harmonic if and only if $N_{st}$ is parallel to $N$.

Equivalently, using the Lie algebra structure, this condition can be written as
\[
[N, N_{st}] = 0,
\]
as appears, for instance, in \cite{MR1630443}.

\subsection{Frames for maps into $H^2$}

We now introduce a frame formulation for maps into the hyperbolic plane, which will be fundamental for the loop group construction \cite{MR1664887,MR900587} developed later.

Let $\Omega \subset \mathbb{R}^{1,1}$ be a simply connected domain, and let
\[
\nu : \Omega \to H^2 \subset \slR
\]
be a smooth map.

\begin{definition}
An $\mathrm{SL}(2,\mathbb{R})$-frame for $\nu$ is a smooth map
\[
F : \Omega \to \mathrm{SL}(2,\mathbb{R})
\]
such that
\[
\nu = \operatorname{Ad}_F e_1,
\]
see, for example, \cite{MR1630443}.
\end{definition}

The Maurer--Cartan form of the frame $F$ is defined by
\[
\alpha = F^{-1} dF.
\]
This is a $\slR$-valued $1$-form on $\Omega$, see \cite{MR900587}.

We decompose $\slR$ as
\[
\slR = \mathfrak{k} \oplus \mathfrak{p}, \qquad 
\mathfrak{k} = \mathrm{Span}\{e_1\}, \quad 
\mathfrak{p} = \mathrm{Span}\{e_2, e_3\}.
\]
as is standard in the theory of symmetric spaces (see, e.g., \cite{MR1630443}). Accordingly, we write
\[
\alpha = \alpha_{\mathfrak{k}} + \alpha_{\mathfrak{p}}.
\]

In null coordinates $(x,y)$ on $\Omega$, we can write
\[
\alpha = (U_{\mathfrak{k}} + U_{\mathfrak{p}})\,dx + (V_{\mathfrak{k}} + V_{\mathfrak{p}})\,dy,
\]
where $U_{\mathfrak{k}}, V_{\mathfrak{k}} \in \mathfrak{k}$ and $U_{\mathfrak{p}}, V_{\mathfrak{p}} \in \mathfrak{p}$.

\medskip

In order to simplify the equations, we choose a special frame adapted to $\nu$. After a suitable reparametrization of the null coordinates, we may assume that
\[
\langle \nu_x, \nu_x \rangle = 1.
\]
Then we can construct a frame $F$ such that
\[
\nu = \operatorname{Ad}_F e_1, \qquad 
\nu_x = \operatorname{Ad}_F e_3, \qquad 
\nu_y = \operatorname{Ad}_F (a e_2 + b e_3),
\]
for some smooth functions $a,b : \Omega \to \mathbb{R}$.

With respect to this frame, the Maurer--Cartan form takes the form
\[
\alpha = (c e_1 + A e_2 + B e_3)\,dx + (d e_1 + A' e_2 + B' e_3)\,dy.
\]

A direct computation shows that
\[
U_{\mathfrak{p}} = -\tfrac{1}{2} e_2, \qquad U_{\mathfrak{k}} = c e_1,
\]
and
\[
V_{\mathfrak{p}} = -\tfrac{b}{2} e_2 + \tfrac{a}{2} e_3, \qquad V_{\mathfrak{k}} = d e_1.
\]

Finally, the Maurer--Cartan equation
\[
d\alpha + \alpha \wedge \alpha = 0
\]
(see, for example, \cite{MR900587}) is equivalent to the system
\beqa\label{MC-system}
\begin{cases}
\partial_y U_{\mathfrak{k}} - \partial_x V_{\mathfrak{k}} = [U_{\mathfrak{p}}, V_{\mathfrak{p}}], \\
\partial_y U_{\mathfrak{p}} - \partial_x V_{\mathfrak{p}} = [U_{\mathfrak{k}}, V_{\mathfrak{p}}] + [U_{\mathfrak{p}}, V_{\mathfrak{k}}].
\end{cases}
\eeqa

%% file: constructionv2.tex
\section{Loop group construction}

In this section we develop a loop group formulation of Lorentz harmonic maps into $H^2$, which leads to a constructive representation via a DPW-type method (see, e.g., \cite{MR1664887,MR2491604}).
\subsection{Extended frames}

Let $\nu : \Omega \to H^2$ be a smooth map and let $F : \Omega \to \mathrm{SL}(2,\mathbb{R})$ be a frame as in Section 2.3, with Maurer--Cartan form
\[
\alpha = F^{-1} dF = (U_{\mathfrak{k}} + U_{\mathfrak{p}})\,dx + (V_{\mathfrak{k}} + V_{\mathfrak{p}})\,dy.
\]

For $\lambda \in \mathbb{R}^\times$, define the $\lambda$-dependent $1$-form
\[
\hat{\alpha} = (U_{\mathfrak{k}} + \lambda U_{\mathfrak{p}})\,dx + (V_{\mathfrak{k}} + \lambda^{-1} V_{\mathfrak{p}})\,dy.
\]

This construction is standard in the loop group approach to harmonic maps (see, e.g., \cite{MR900587,MR1630443}). The following result is fundamental.

\begin{lemma}
The map $\nu$ is harmonic if and only if the $1$-form $\hat{\alpha}$ satisfies the Maurer--Cartan equation
\[
d\hat{\alpha} + \hat{\alpha} \wedge \hat{\alpha} = 0
\]
for all $\lambda \in \mathbb{R}^\times$. This characterization is well known in the integrable systems approach to harmonic maps (see, e.g., \cite{MR1664887,MR1630443}).
\end{lemma}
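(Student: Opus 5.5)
Since $\alpha$ is the Maurer--Cartan form of $F$, the system \eqref{MC-system} holds automatically. The plan is to expand the Maurer--Cartan equation for $\hat{\alpha}$ in powers of $\lambda$, compare it with \eqref{MC-system}, and find that the only additional condition is exactly the harmonicity of $\nu$.

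\textbf{Step 1: expand in $\lambda$.} Writing $\hat{\alpha}=\hat U\,dx+\hat V\,dy$, the equation $d\hat\alpha+\hat\alpha\wedge\hat\alpha=0$ reads $\partial_x\hat V-\partial_y\hat U+[\hat U,\hat V]=0$. I will use the symmetric space relations
\[
[\mathfrak k,\mathfrak k]\subset\mathfrak k,\qquad [\mathfrak k,\pp]\subset\pp,\qquad [\pp,\pp]\subset\mathfrak k,
\]
which are checked directly on $e_1,e_2,e_3$. Substituting $\hat U=U_{\mathfrak k}+\lambda U_{\pp}$ and $\hat V=V_{\mathfrak k}+\lambda^{-1}V_{\pp}$ and sorting by powers of $\lambda$ and by $\mathfrak k/\pp$ components gives four conditions:
\begin{itemize}
\item $\lambda^0$, $\mathfrak k$-part: $\partial_y U_{\mathfrak k}-\partial_x V_{\mathfrak k}=[U_{\mathfrak p},V_{\mathfrak p}]$, since $[U_{\mathfrak k},V_{\mathfrak k}]=0$ because $\mathfrak k$ is one-dimensional.
\item $\lambda^1$, $\pp$-part: $\partial_y U_{\pp}=[U_{\pp},V_{\mathfrak k}]$.
\item $\lambda^{-1}$, $\pp$-part: $\partial_x V_{\pp}=[V_{\pp},U_{\mathfrak k}]$.
\end{itemize}
The first is the first line of \eqref{MC-system}. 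The last two are the $\pp$-equation of \eqref{MC-system} split into two separate pieces. So the Maurer--Cartan equation holds for all $\lambda$ if and only if \eqref{MC-system} holds together with one extra equation, say
\[
\partial_y U_{\pp}+[V_{\mathfrak k},U_{\pp}]=0.
\]

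\textbf{Step 2: identify the extra equation with harmonicity.} Since $\nu=\Ad_F e_1$, we have $\nu_x=\Ad_F[U_{\pp},e_1]$ and $\nu_y=\Ad_F[V_{\pp},e_1]$; the $\mathfrak k$-parts drop out because they commute with $e_1$. Differentiating once more gives
\[
\nu_{xy}=\Ad_F\bigl([V_{\mathfrak k}+V_{\pp},[U_{\pp},e_1]]+[\partial_yU_{\pp},e_1]\bigr).
\]
Here $[U_{\pp},e_1]\in\pp$. The term $[V_{\pp},[U_{\pp},e_1]]$ lies in $\mathfrak k$, so after applying $\Ad_F$ it is parallel to $\nu$. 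The remaining part is
\[
\bigl[V_{\mathfrak k},[U_{\pp},e_1]\bigr]+\bigl[\partial_y U_{\pp},e_1\bigr]=\bigl[\partial_yU_{\pp}+[V_{\mathfrak k},U_{\pp}],\,e_1\bigr],
\]
by the Jacobi identity together with $[V_{\mathfrak k},e_1]=0$. This bracket lies in $\pp$.

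Now recall from Section 2 that $x,y$ here play the role of null coordinates, and that harmonicity means $\nu_{xy}$ is parallel to $\nu$. Equivalently, the $\pp$-component of $\Ad_F^{-1}\nu_{xy}$ must vanish. Since $\operatorname{ad}_{e_1}$ is injective on $\pp$, this $\pp$-component vanishes exactly when the extra equation of Step 1 holds. This proves the equivalence.

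\textbf{The delicate point.} The main obstacle is bookkeeping: making sure that the decomposition in Step 1 produces only the single extra condition. This requires using \eqref{MC-system} to deduce the $\lambda^{-1}$ equation from the $\lambda^{1}$ equation (their sum is the $\pp$-line of \eqref{MC-system}). It also requires tracking signs in the Jacobi step of Step 2. Doing these carefully with the explicit $\mathfrak{sl}(2,\real)$ brackets of the basis $e_1,e_2,e_3$ settles both.
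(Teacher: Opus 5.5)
Your proof is correct. Step 1 is the same $\lambda$-expansion the paper carries out. The paper's proof, however, stops after writing down the resulting ``split Maurer--Cartan system'' and only remarks that it is stronger than the Maurer--Cartan equation for $F$; it never shows that the extra condition is equivalent to harmonicity of $\nu$. Your Step 2 supplies exactly this missing link. You compute $\nu_{xy}=\Ad_F\bigl([V_{\mathfrak k}+V_{\mathfrak p},[U_{\mathfrak p},e_1]]+[\partial_yU_{\mathfrak p},e_1]\bigr)$ and isolate its $\nu^\perp$-component $\Ad_F\bigl[\partial_yU_{\mathfrak p}+[V_{\mathfrak k},U_{\mathfrak p}],e_1\bigr]$. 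You then use injectivity of $\operatorname{ad}_{e_1}$ on $\mathfrak p$. Combined with your observation that \eqref{MC-system} recovers the $\lambda^{-1}$ equation from the $\lambda^{1}$ one, only a single new condition remains, and it is precisely harmonicity.

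Your $\lambda^{-1}$ equation $\partial_xV_{\mathfrak p}=[V_{\mathfrak p},U_{\mathfrak k}]$ also has the correct sign. The paper's version $\partial_xV_{\mathfrak p}=[U_{\mathfrak k},V_{\mathfrak p}]$ contradicts the second line of \eqref{MC-system}, and the paper itself later uses your sign in the proof of Lemma~\ref{lemma4.1}. One cosmetic slip: you announce four conditions but list three. The remaining components vanish identically by the bracket relations.
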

\begin{proof}
Consider the Maurer--Cartan equation for the loop algebra-valued form
\[
\hat{\alpha} = (U_k + \lambda U_p)\,dx + (V_k + \lambda^{-1} V_p)\,dy.
\]

A direct computation yields
\[
d\hat{\alpha} + \hat{\alpha} \wedge \hat{\alpha}
=
\Big(
\partial_y U_k - \partial_x V_k + [U_p,V_p]
\Big)\,dx \wedge dy
\]
\[
+\, \lambda\Big(
\partial_y U_p - [U_p,V_k]
\Big)\,dx \wedge dy
+\, \lambda^{-1}\Big(
-\partial_x V_p + [U_k,V_p]
\Big)\,dx \wedge dy.
\]

Requiring that this expression vanishes for all \(\lambda\) is equivalent to the system
\[
\partial_y U_k - \partial_x V_k = [U_p,V_p],
\]
\[
\partial_y U_p = [U_p,V_k], \qquad
\partial_x V_p = [U_k,V_p].
\]

This system is the integrability condition for the extended frame \(\hat F\), and is therefore stronger than the Maurer--Cartan equation for \(F\). We will refer to this system as the \emph{split Maurer--Cartan system}.
\end{proof}
As a consequence of the previous lemma, for a harmonic map $\nu$ we can integrate the equation
\[
\hat{\alpha} = \hat{F}^{-1} d\hat{F},
\]
to obtain a map
\[
\hat{F} : \Omega \to \Lambda \mathrm{SL}(2,\mathbb{C}),
\]
called an \emph{extended frame}. Such frames play a central role in the loop group formulation of harmonic maps \cite{MR900587}.

By construction, $\hat{F}$ depends on the parameter $\lambda$, and satisfies
\[
\hat{F}|_{\lambda=1} = F.
\]
In particular, the original map $\nu$ can be recovered as
\[
\nu = \operatorname{Ad}_{\hat{F}|_{\lambda=1}} e_1.
\]

\subsection{The d'Alembert construction}

We now describe explicitly how to construct an extended frame from a pair of potentials. This construction is a Lorentzian analogue of the classical d'Alembert method and is closely related to the integrable systems approach to harmonic maps (see, e.g., \cite{MR581396,MR2158167}).

Let $(\mathcal{X},\mathcal{Y})$ be a potential pair as introduced in the DPW framework (see \cite{MR1664887,MR2491604}), that is, two $\Lambda \sl(2,\mathbb{C})$-valued $1$-forms depending only on $x$ and $y$, respectively, of the form
\[
\mathcal{X}(x) = \sum_{k \le 1} A_k(x)\lambda^k\,dx, \qquad 
\mathcal{Y}(y) = \sum_{k \ge -1} B_k(y)\lambda^{-k}\,dy.
\]

We integrate the differential equations
\[
X^{-1} dX = \mathcal{X}, \qquad 
Y^{-1} dY = \mathcal{Y},
\]
with initial conditions $X(x_0) = I$, $Y(y_0) = I$.
This yields maps
\[
X : I_x \to \Lambda \mathrm{SL}(2,\mathbb{C}), \qquad
Y : I_y \to \Lambda \mathrm{SL}(2,\mathbb{C}).
\]

Next, we define
\[
\Phi(x,y) = X(x)^{-1} Y(y).
\]

Assuming that $\Phi(x,y)$ lies in the big cell, which holds on an open dense subset, we perform a Birkhoff decomposition (see, e.g., \cite{MR900587})
\[
\Phi(x,y) = H_-(x,y)\, H_+(x,y),
\]
where $H_-$ extends holomorphically to $|\lambda| > 1$ and $H_+$ to $|\lambda| < 1$.

The extended frame is then defined by
\[
\hat{F}(x,y) = Y(y)\, H_+(x,y)^{-1}
= X(x)\, H_-(x,y),
\]
following the standard DPW construction \cite{MR1664887}.

By construction, $\hat{F}$ satisfies
\[
\hat{F}^{-1} d\hat{F}
= (U_{\mathfrak{k}} + \lambda U_{\mathfrak{p}})\,dx
+ (V_{\mathfrak{k}} + \lambda^{-1} V_{\mathfrak{p}})\,dy,
\]
and therefore defines a Lorentz harmonic map into $H^2$. This procedure provides an explicit method for constructing solutions to the harmonic map equation from holomorphic data, which is one of the key features of the DPW approach.

\subsection{A first explicit example}

We now present a completely explicit example illustrating the construction. The goal is to show how the method works in practice, avoiding any formal or implicit steps, in the spirit of explicit constructions in integrable surface theory.

A natural first choice is to consider the case where one of the potentials vanishes. 
Geometrically, this corresponds to maps depending on a single variable, and analytically it eliminates the need for a nontrivial Birkhoff decomposition. 
This allows us to carry out all computations explicitly.

We therefore consider the potential pair
\[
\mathcal{X}(x) = \lambda A(x)\,dx, \qquad
\mathcal{Y}(y) = 0,
\]
where
\[
A(x) = \frac{1}{2}(\tanh x\, e_3 - \operatorname{sech} x\, e_2).
\]

We integrate the system
\[
X^{-1} dX = \mathcal{X}, \qquad Y^{-1} dY = \mathcal{Y},
\]
with initial conditions $X(0)=I$, $Y(0)=I$.

Since $\mathcal{Y}=0$, we obtain immediately
\[
Y(y) = I.
\]

To compute $X(x)$, observe that the equation
\[
X^{-1} dX = \lambda A(x)\,dx
\]
reduces, for $\lambda=1$, to
\[
X^{-1} X_x = A(x).
\]
A direct integration yields
\[
X(x) =
\begin{pmatrix}
\sqrt{\cosh x} & -\dfrac{\sinh x}{\sqrt{\cosh x}} \\
0 & \dfrac{1}{\sqrt{\cosh x}}
\end{pmatrix}.
\]

We now construct
\[
\Phi(x,y) = X(x)^{-1} Y(y) = X(x)^{-1}.
\]

Since $\Phi$ depends only on $x$ and contains no negative powers of $\lambda$, 
its Birkhoff decomposition is trivial:
\[
\Phi = H_- H_+, \qquad H_- = I, \quad H_+ = \Phi.
\]

Thus the extended frame is given by
\[
\hat{F}(x,y) = X(x).
\]

Evaluating at $\lambda=1$, we obtain the frame
\[
F(x,y) = X(x).
\]

Finally, the associated harmonic map is
\[
\nu = \operatorname{Ad}_F e_1 = F e_1 F^{-1}.
\]

A direct computation gives
\[
F e_1 =
\begin{pmatrix}
\sqrt{\cosh x} & -\dfrac{\sinh x}{\sqrt{\cosh x}} \\
0 & \dfrac{1}{\sqrt{\cosh x}}
\end{pmatrix}
\begin{pmatrix}
0 & -1 \\
1 & 0
\end{pmatrix}
=
\begin{pmatrix}
-\dfrac{\sinh x}{\sqrt{\cosh x}} & -\sqrt{\cosh x} \\
\dfrac{1}{\sqrt{\cosh x}} & 0
\end{pmatrix},
\]

and
\[
F^{-1} =
\begin{pmatrix}
\dfrac{1}{\sqrt{\cosh x}} & \dfrac{\sinh x}{\sqrt{\cosh x}} \\
0 & \sqrt{\cosh x}
\end{pmatrix}.
\]

Multiplying, we obtain
\[
\nu(x,y) = F e_1 F^{-1}
=
\begin{pmatrix}
-\sinh x & -\cosh x \\
\cosh x & \sinh x
\end{pmatrix}.
\]
\begin{remark}
This example illustrates the general fact that any map $\nu$ depending only on one of the null coordinates is trivially Lorentz harmonic, since the mixed derivative $\nu_{st}$ vanishes identically, and therefore $[\nu,\nu_{st}] = 0$.
\end{remark}

%% file: cauchy_problem.tex
\section{The Cauchy problem}

In this section we solve the Cauchy problem for Lorentz harmonic maps into $H^2$ using the loop group method developed in Section~3.

More precisely, given suitable initial data along a characteristic curve, we construct a harmonic map realizing these data. While existence and uniqueness follow from general analytic results such as the Cauchy--Kowalevski theorem (see, e.g., \cite{MR3012036}), our goal is to obtain an explicit and constructive solution using integrable systems techniques.
\medskip

Let $I \subset \mathbb{R}$ be an open interval. We consider analytic initial data
\[
N_0 : I \to H^2, \qquad N_1 : I \to S^2_1,
\]
satisfying
\[
\langle N_1(t), N_0(t) \rangle = 0,
\]
and the nondegeneracy condition
\[
\langle N_1(t), N_0'(t) - N_1(t) \rangle \not\equiv 0.
\]

Geometrically, $N_0$ prescribes the value of the map along the curve $x=y$, while $N_1$ determines its derivative in the $x$-direction.

\begin{theorem}
Let $I \subset \mathbb{R}$ be an open interval. Given analytic Cauchy data $N_0$ and $N_1$ as above, there exists a Lorentz harmonic map
\[
\nu : I \times I \to H^2,
\]
such that
\[
\nu(t,t) = N_0(t), \qquad \nu_x(t,t) = N_1(t).
\]
\end{theorem}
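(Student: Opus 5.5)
We will construct $\nu$ by the d'Alembert (DPW-type) method of Section~3. The plan is to read off from the Cauchy data an extended frame along the diagonal $x=y$, build from it a pair of potentials $(\mathcal{X},\mathcal{Y})$, and then obtain the harmonic map through the Birkhoff splitting. This mirrors the construction used for the geometric Cauchy problem of timelike CMC surfaces in the references on Lorentz harmonic maps.

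\textbf{Step 1: a frame along the diagonal.} From $N_0,N_1$ we construct an analytic map $F_0:I\to\SLR$ with
\[
\operatorname{Ad}_{F_0}e_1=N_0,\qquad \operatorname{Ad}_{F_0}e_3=N_1/|N_1|.
\]
The condition $\langle N_1,N_0\rangle=0$ guarantees that $N_1$ is tangent to $H^2$ at $N_0$, so such an adapted frame exists. Along the curve $x=y$, the chain rule gives $N_0'=\nu_x+\nu_y$. We therefore prescribe the $y$-derivative along the diagonal as $\nu_y=N_0'-N_1$.

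Writing $\alpha_0=F_0^{-1}F_0'\,dt$, we then decompose the restriction of the desired Maurer--Cartan form to the diagonal. Its $\mathfrak{p}$-parts $U_{\mathfrak p}$ and $V_{\mathfrak p}$ are determined by $N_1$ and $N_0'-N_1$, exactly as in Section~2.3. The $\mathfrak k$-part of $\alpha_0$ is split between $U_{\mathfrak k}$ and $V_{\mathfrak k}$ in some fixed way, for instance by assigning all of it to $U_{\mathfrak k}$. This choice only changes the frame by a $K$-gauge and does not affect $\nu$.

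\textbf{Step 2: the loop along the diagonal and the boundary potentials.} We define the loop-valued $1$-form on $I$
\[
\hat\alpha_0=\big(U_{\mathfrak k}+V_{\mathfrak k}+\lambda U_{\mathfrak p}+\lambda^{-1}V_{\mathfrak p}\big)\,dt
\]
and integrate it to obtain $\hat F_0(t)$ with $\hat F_0(t_0)=I$. We then perform a Birkhoff decomposition $\hat F_0=\hat F_{0+}\hat F_{0-}$ into factors holomorphic inside and outside the unit circle, normalized at $\lambda=0$ or at $\lambda=\infty$. This decomposition exists near $t_0$, and on all of $I$ if we work in the big cell, using analyticity to continue.

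Next we set
\[
\mathcal X(x)=\hat F_{0-}\,\hat\alpha_0\,\hat F_{0-}^{-1}+\ldots
\]
More precisely, we define $\mathcal X$ as the Maurer--Cartan form of the factor $\hat F_0\hat F_{0-}^{-1}$, viewed as a function of $x=t$, and $\mathcal Y$ analogously, viewed as a function of $y=t$. We check the power bounds $k\le1$ and $k\ge-1$ by a degree count: conjugating $\hat\alpha_0$ by a factor holomorphic on one side only introduces powers on that side.

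\textbf{Step 3: verification.} Applying the construction of Section~3.2 to $(\mathcal X,\mathcal Y)$ yields $\hat F(x,y)$, whose Maurer--Cartan form has the split form of the Lemma in Section~3.1. Hence $\nu=\operatorname{Ad}_{\hat F|_{\lambda=1}}e_1$ is Lorentz harmonic by that Lemma.

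The main point is to show that $\hat F(t,t)=\hat F_0(t)$ up to a $K$-valued gauge. This follows from uniqueness of the Birkhoff factorization: at $x=y=t$ the factor $\Phi=X^{-1}Y$ recombines into the diagonal decomposition. Once this holds, restricting to $\lambda=1$ gives $\nu(t,t)=N_0(t)$. Comparing the $dx$-component of the Maurer--Cartan form on the diagonal, namely its $\lambda^1$-coefficient $U_{\mathfrak p}$, gives $\nu_x(t,t)=N_1(t)$.

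The nondegeneracy hypothesis is used to ensure $V_{\mathfrak p}\not\equiv0$. This keeps the frame normalization of Section~2.3 valid, since $a\neq0$ generically.

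\textbf{Main obstacle.} We expect the hardest step to be the identification $\hat F|_{x=y}=\hat F_0$: one must show that the potentials built from the diagonal splitting reproduce, after the second splitting, the original loop. If the direct argument is messy, the fallback is uniqueness. The harmonic map obtained from $(\mathcal X,\mathcal Y)$ and the data $(N_0,N_1)$ both solve the same characteristic Goursat/Cauchy problem for the analytic hyperbolic system. Cauchy--Kowalevski uniqueness, which the paper permits invoking, then forces them to agree on $I\times I$.
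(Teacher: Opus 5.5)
Your argument works, but it takes a different route from the paper.

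\textbf{The paper's route.} The paper never Birkhoff-splits the diagonal loop. It feeds the same three-term form $\hat\alpha(u)=\bigl(ce_1-\tfrac12e_2\lambda+\tfrac12(ae_3-be_2)\lambda^{-1}\bigr)du$ into both slots, $\mathcal X=\hat\alpha(x)\,dx$ and $\mathcal Y=\hat\alpha(y)\,dy$, with equal initial values. Then $X(t)=Y(t)$, so $\Phi(t,t)=I$ and $H_\pm=I$ on the diagonal, and $\hat F(t,t)=X(t)$ is immediate. The step you call the main obstacle never arises.

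\textbf{Your route.} You split $\hat F_0=\hat F_{0+}\hat F_{0-}=\tilde F_{0-}\tilde F_{0+}$ to obtain one-sided normalized potentials. The second, opposite-order splitting is what ``analogously'' must mean, with $\mathcal Y=\tilde F_{0-}^{-1}d\tilde F_{0-}$. Then $\Phi(t,t)=\hat F_{0+}^{-1}\tilde F_{0-}=\hat F_{0-}\tilde F_{0+}^{-1}$ is already a $(-,+)$ factorization. Hence $\hat F(t,t)=\hat F_0(t)D(t)$ with $D$ taking values in $K$. The power count ($\hat F^{-1}\hat F_x$ has only $\lambda^0,\lambda^1$ terms) then gives $\nu_x(t,t)=\Ad_{F_0}[U_{\pp},e_1]=N_1$. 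This closes the argument, so the Cauchy--Kowalevski fallback is unnecessary.

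\textbf{What your route buys.} It produces canonical normalized potentials. It also reads the $\mathfrak k$-part directly from $F_0^{-1}F_0'$, which guarantees that $\hat F_0|_{\lambda=1}$ frames $N_0$. As a result the nondegeneracy hypothesis plays no role. Your remark tying that hypothesis to $a\neq0$ is off: the hypothesis is $b\not\equiv0$, and in the paper's proof it enters through $c=a_x/(2b)$.

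\textbf{What it costs.} It needs two extra factorizations of $\hat F_0(t)$, and these fail wherever $\hat F_0(t)$ leaves the big cell. At such points $\mathcal X$ and $\mathcal Y$ acquire poles, and ``analyticity to continue'' does not remove them. As written, you therefore only get a solution near $t_0$. In the paper's route, by contrast, $\Phi\equiv I$ along the whole diagonal.

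\textbf{A small fix.} Integrate $\hat\alpha_0$ from $F_0(t_0)$ rather than from $I$, or left-multiply the final frame by $F_0(t_0)$. Otherwise you obtain $\nu(t,t)=\Ad_{F_0(t_0)^{-1}}N_0(t)$ instead of $N_0(t)$.
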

This result can be viewed as a constructive version of the geometric Cauchy problem for harmonic maps, in the spirit of integrable systems approaches (see, e.g., \cite{MR2158167,MR3019511}).
\begin{proof}
We seek a harmonic map $\nu : I \times I \to H^2$ satisfying the prescribed initial conditions along the diagonal $x=y$, following the loop group strategy developed in Section~3.

Introduce coordinates
\[
t = \frac{x+y}{2}, \qquad s = \frac{x-y}{2},
\]
so that the diagonal corresponds to $s=0$. The initial conditions become
\[
\nu(t,0) = N_0(t), \qquad \nu_x(t,0) = N_1(t).
\]

In particular, we have
\[
\partial_t \nu(t,0) = N_0'(t).
\]

From the relation $\partial_t = \partial_x + \partial_y$, it follows that along the diagonal
\[
\nu_y = \partial_t \nu - \partial_x \nu = N_0'(t) - N_1(t).
\]

We now compute the coefficients of the Maurer--Cartan form, as in the frame formulation of harmonic maps described in Section~2, along the diagonal. 
Recall that
\[
a = \frac{1}{2} \langle \nu_y, [\nu_x,\nu] \rangle, 
\qquad 
b = \langle \nu_x, \nu_y \rangle.
\]

Substituting the expressions above, we obtain
\[
a = \frac{1}{2} \langle N_0'(t) - N_1(t), [N_1(t), N_0(t)] \rangle
= \langle N_0'(t), [N_1(t), N_0(t)] \rangle,
\]
and
\[
b = \langle N_1(t), N_0'(t) - N_1(t) \rangle.
\]

\medskip

We now compute the derivative of $a$. The key observation is that, by harmonicity, the mixed derivative $\nu_{xy}$ is orthogonal to both $\nu$ and $\nu_x$, which simplifies the computation.

A direct calculation yields
\[
a_x = \langle N_0'(t) - N_1(t), [N_1'(t), N_0(t)] \rangle.
\]

Thus
\[
c = \frac{a_x}{2b}
= \frac{\langle N_0'(t) - N_1(t), [N_1'(t), N_0(t)] \rangle}
{2\langle N_1(t), N_0'(t) - N_1(t) \rangle}.
\]

\medskip

We now encode this information in a $\lambda$-dependent Maurer--Cartan form along the initial curve. This step corresponds to the introduction of a normalized potential in the DPW method (see \cite{MR1664887,MR2491604}).
\[
\hat{\alpha}(u)
=
\left(
c e_1 - \frac{1}{2} e_2 \lambda
+ \frac{1}{2}(a e_3 - b e_2)\lambda^{-1}
\right) du.
\]

This form determines a pair of potentials
\[
\mathcal{X}(x) = \hat{\alpha}(x)\,dx, \qquad
\mathcal{Y}(y) = \hat{\alpha}(y)\,dy.
\]

We now apply the d’Alembert construction of Section~3, which provides an explicit integration procedure for the harmonic map equation. Integrating
\[
X^{-1} dX = \mathcal{X}, \qquad
Y^{-1} dY = \mathcal{Y},
\]
with initial condition $X(x_0) = Y(x_0) = K$, where $K \in \mathrm{SL}(2,\mathbb{R})$ satisfies
\[
\operatorname{Ad}_K e_1 = N_0(x_0),
\]
we obtain maps
\[
X(x), \qquad Y(y).
\]

Along the diagonal $x=y$ we have $X(x) = Y(x)$, and therefore the Birkhoff decomposition of (see, e.g., \cite{MR900587})
\[
\Phi(x,y) = X(x)^{-1} Y(y)
\]
is trivial along the diagonal:
\[
\Phi = H_- H_+, \qquad H_- = H_+ = I.
\]

Thus the extended frame is given by
\[
\hat{F}(x,y) = X(x) H_-(x,y),
\]
which coincides with $X(x)$ along the diagonal.

Let
\[
F = \hat{F}|_{\lambda=1}, \qquad
\nu = \operatorname{Ad}_F e_1.
\]

By construction, $\hat{F}$ satisfies the Maurer--Cartan equation, hence $\nu$ is harmonic, which is the standard mechanism in the loop group approach to harmonic maps (see \cite{MR1664887}).

\medskip

Finally, we verify the initial conditions. Along the diagonal,
\[
\hat{F}^{-1} d\hat{F} = \mathcal{X}(x) = \hat{\alpha}(x)\,dx.
\]

Separating the $dx$ and $dy$ contributions, we obtain
\[
\hat{F}^{-1} d\hat{F}
=
\left(c e_1 - \frac{1}{2} e_2 \lambda\right) dx
-
\frac{1}{2}(a e_3 - b e_2)\lambda^{-1} dy.
\]

This implies
\[
\nu_x = \operatorname{Ad}_F [F^{-1}F_x, e_1] = \operatorname{Ad}_F e_3,
\]
and
\[
\nu_y = \operatorname{Ad}_F (a e_2 + b e_3).
\]

In particular,
\[
\nu(t,0) = N_0(t), \qquad \nu_x(t,0) = N_1(t),
\]
which shows that $\nu$ solves the Cauchy problem.
\end{proof}

\begin{example}
We illustrate the construction by solving explicitly the Cauchy problem using the procedure of the proof.

Consider the initial data
\[
N_0(t)
=
\begin{pmatrix}
-\dfrac{2t}{1-t^2} & -\dfrac{1+t^2}{1-t^2} \\
\dfrac{1+t^2}{1-t^2} & \dfrac{2t}{1-t^2}
\end{pmatrix},
\]
and
\[
N_1(t)
=
\frac{1}{(1-t^2)^2}
\begin{pmatrix}
-(1+t^2) & -2t \\
2t & (1+t^2)
\end{pmatrix}.
\]

A direct computation shows that
\[
\langle N_1(t), N_0(t) \rangle = 0,
\]
and that the nondegeneracy condition is satisfied.

\medskip

We now compute the coefficients appearing in the construction. Using the formulas of the proof, we obtain
\[
b = \langle N_1, N_0' - N_1 \rangle = \frac{1}{(1-t^2)^2},
\]
and
\[
a = \langle N_0', [N_1, N_0] \rangle = \frac{2t}{(1-t^2)^2}.
\]

At this point, the coefficient \(c\) depends on the choice of frame. 
More precisely, we may perform a gauge transformation by a map \(G(t)\) taking values in the stabilizer of \(e_1\), given explicitly by
\[
G(t) =
\begin{pmatrix}
\cosh \theta(t) & \sinh \theta(t)\\
\sinh \theta(t) & \cosh \theta(t)
\end{pmatrix}.
\]
Under this transformation, the Maurer--Cartan form changes as
\[
\hat{\alpha} \;\mapsto\; G^{-1} \hat{\alpha} G + G^{-1} dG,
\]
and a direct computation shows that the coefficient \(c\) transforms as
\[
c \;\mapsto\; \tilde c = c + \theta'(t).
\]

Choosing \(\theta\) such that
\[
\theta'(t) = \frac{t}{1-t^2} - c(t),
\]
we may assume without loss of generality that
\[
\tilde c = \frac{t}{1-t^2}.
\]

\medskip

We therefore obtain the $\lambda$-dependent Maurer--Cartan form
\[
\hat{\alpha}(t)
=
\left(
\frac{t}{1-t^2} e_1
- \frac{1}{2} e_2 \lambda
+ \frac{1}{2}
\left(
\frac{2t}{(1-t^2)^2} e_3
- \frac{1}{(1-t^2)^2} e_2
\right)\lambda^{-1}
\right) dt.
\]

This determines the potentials
\[
\mathcal{X}(x) = \hat{\alpha}(x)\,dx, \qquad
\mathcal{Y}(y) = \hat{\alpha}(y)\,dy.
\]

\medskip

Although the extended frame depends on the loop parameter \(\lambda\), in this example we perform the integration at \(\lambda=1\), which is sufficient to recover the harmonic map. We now integrate
\[
X^{-1} dX = \mathcal{X}, \qquad
Y^{-1} dY = \mathcal{Y}.
\]

A direct integration yields
\[
X(x)
=
\begin{pmatrix}
\dfrac{1}{\sqrt{1-x^2}} & \dfrac{x}{\sqrt{1-x^2}} \\
\dfrac{x}{\sqrt{1-x^2}} & \dfrac{1}{\sqrt{1-x^2}}
\end{pmatrix},
\]
and similarly
\[
Y(y)
=
\begin{pmatrix}
\dfrac{1}{\sqrt{1-y^2}} & \dfrac{y}{\sqrt{1-y^2}} \\
\dfrac{y}{\sqrt{1-y^2}} & \dfrac{1}{\sqrt{1-y^2}}
\end{pmatrix}.
\]

\medskip

We form
\[
\Phi(x,y) = X(x)^{-1} Y(y)=\frac{1}{\sqrt{(1-x^2)(1-y^2)}}\begin{pmatrix}
    1-xy & y-x\\y-x& 1-xy
\end{pmatrix},
\]
and perform the Birkhoff decomposition
\[
\Phi = H_- H_+.
\]

In this case, a direct computation shows that the decomposition is given by
\[
H_- =
\begin{pmatrix}
1 & 0 \\
\frac{y-x}{1-xy} & 1
\end{pmatrix}, \qquad
H_+ =
\begin{pmatrix}
\frac{1}{1-xy} & \frac{x}{1-xy} \\
0 & 1
\end{pmatrix}.
\]

\medskip

The extended frame is then
\[
\hat{F} = X(x) H_-,
\]
and evaluating at $\lambda=1$ we obtain
\[
F(x,y)
=
\begin{pmatrix}
\dfrac{1}{1-xy} & \dfrac{x}{1-xy} \\
\dfrac{y}{1-xy} & \dfrac{1}{1-xy}
\end{pmatrix}.
\]

\medskip

Finally, the associated harmonic map is
\[
\nu = \operatorname{Ad}_F e_1,
\]
which yields
\[
\nu(x,y)
=
\begin{pmatrix}
-\dfrac{x+y}{1-xy} & -\dfrac{1+xy}{1-xy} \\
\dfrac{1+xy}{1-xy} & \dfrac{x+y}{1-xy}
\end{pmatrix}.
\]

This provides a fully explicit solution of the Cauchy problem.
\end{example}

%% file: surfaces.tex
The previous sections provide a complete description of Lorentz harmonic maps 
into $H^2$ via loop group methods, together with an explicit solution of the Cauchy problem. 
We now turn to the geometric interpretation of these maps, following a classical philosophy 
in differential geometry relating harmonic maps and surface theory (see, e.g., \cite{ruh1970tension,MR1630443}).

In particular, we investigate when a Lorentz harmonic map can be realized 
as the Gauss map of a spacelike surface in anti-de Sitter space, and how 
such surfaces can be reconstructed from the associated frame.

\section{Spacelike surfaces in $\ads$ with Lorentz harmonic Gauss map with respect to the second fundamental form}
Define anti-de Sitter 3-space as the quadric
\beqas
\ads=\{ x\in\mathbb{R}^4_2\mid \langle x,x\rangle =-1\}
\eeqas
with the induced metric (see, e.g., \cite{MR0719023,MR3468639}). Let $\nu:\Omega\rightarrow\mathbb{H}^2$ be a harmonic map. In this section we find the conditions under which $\nu$ is the Gauss map of an immersion $f:\Omega\rightarrow\ads$ such that the pull-back of $\sff$ endows $\Omega$ with the same Lorentz structure. We then construct a unique CGC spacelike immersion from appropiate data along a curve.

\subsection{Anti-de Sitter 3-space $\ads$ as a Lie group}

Following the construction done in Section 2, we can identify
\beqas
\ads=\{x\in\mathfrak{gl}(2,\mathbb{R})\mid \langle x,x\rangle =-1\}=\{x\in\mathfrak{gl}(2,\mathbb{R})\mid \det x=1\}=\SLR.
\eeqas
This identification equips $\ads$ with a Lie group structure and a bi-invariant Lorentzian metric, 
which plays a fundamental role in the integrable systems approach (see \cite{MR425012}).
With this metric $\slR$ is isometric to the Lorentz-Minkowski 3-space and its Lie bracket satisfies
\beqas
[e_1,e_2]=2e_3,\quad [e_2,e_3]=-2e_1,\quad [e_3,e_1]=2e_2,
\eeqas
which shows that the bracket is twice the cross-product,
\beqas
[X,Y]=2(X\times Y),\quad X,Y\in\slR.
\eeqas
The following identities are standard consequences of the Lie algebra structure 
and will be used repeatedly in the sequel (see, e.g., \cite{MR0719023}).
\begin{proposition}\label{proplb}:
Let $X,Y,Z\in\slR$, and assume $Z$ is orthogonal to both $X$ and $Y$, then:
\begin{enumerate}
    \item $\langle X,[X,Y]\rangle=0$,
    \item $\langle [X,Y],[X,Y]\rangle=4(\langle X,Y\rangle^2-\langle X,X\rangle\langle Y,Y\rangle )$,
    \item $[[Z,X],[Z,Y]]=-4\langle Z,Z\rangle[X,Y]$,
    \item $\langle [Z,X],[Z,Y]\rangle =-4\langle Z,Z\rangle\langle X,Y\rangle$.
\end{enumerate}
    
\end{proposition}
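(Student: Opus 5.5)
The plan is to derive all four identities from two structural facts about $\slR$ with the inner product of Section 2: ad-invariance of the form, and a ``BAC--CAB'' rule for double brackets. First note that for traceless $X$ the adjugate is $\overline{X}=-X$. Hence on $\slR$ we have $\langle X,Y\rangle=\tfrac12\operatorname{trace}(XY)$, which is a multiple of the Killing form. Therefore
\[
\langle [X,Y],Z\rangle=\langle X,[Y,Z]\rangle ,\qquad X,Y,Z\in\slR .
\]
Item (1) is then immediate: $\langle X,[X,Y]\rangle=\langle [X,X],Y\rangle=0$.

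The second ingredient is the double-bracket identity
\[
[X,[Y,Z]]=4\big(\langle X,Y\rangle Z-\langle X,Z\rangle Y\big).
\]
Both sides are trilinear, so it suffices to check it on the basis $e_1,e_2,e_3$ using the bracket table $[e_1,e_2]=2e_3$, $[e_2,e_3]=-2e_1$, $[e_3,e_1]=2e_2$ and the norms $\langle e_1,e_1\rangle=-1$, $\langle e_2,e_2\rangle=\langle e_3,e_3\rangle=1$. For instance, $[e_1,[e_1,e_2]]=-4e_2=4\langle e_1,e_1\rangle e_2$. Equivalently, this is the Lorentzian triple-cross-product rule combined with $[X,Y]=2X\times Y$. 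Fixing the sign and the constant here is the only delicate point, since in Minkowski signature the signs differ from the Euclidean ones; I would check every basis case explicitly.

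With these two facts, item (2) follows by applying invariance and then the double-bracket rule:
\[
\langle [X,Y],[X,Y]\rangle=\langle X,[Y,[X,Y]]\rangle=\langle X,4(\langle Y,X\rangle Y-\langle Y,Y\rangle X)\rangle,
\]
which equals $4(\langle X,Y\rangle^2-\langle X,X\rangle\langle Y,Y\rangle)$. Item (4) is similar. We have $\langle [Z,X],[Z,Y]\rangle=-\langle X,[Z,[Z,Y]]\rangle$, and expanding $[Z,[Z,Y]]=4(\langle Z,Z\rangle Y-\langle Z,Y\rangle Z)$ with $\langle Z,Y\rangle=0$ gives $-4\langle Z,Z\rangle\langle X,Y\rangle$.

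For item (3), I will write $[[Z,X],W]=-[W,[Z,X]]$ with $W=[Z,Y]$ and expand, which gives $-4(\langle W,Z\rangle X-\langle W,X\rangle Z)$. By (1) we have $\langle W,Z\rangle=0$, and by invariance $\langle W,X\rangle=\langle Z,[Y,X]\rangle$. So
\[
[[Z,X],[Z,Y]]=4\langle Z,[Y,X]\rangle Z .
\]
It remains to identify this with $-4\langle Z,Z\rangle[X,Y]$. Since $Z\perp X,Y$ and $\dim\slR=3$, the vector $[X,Y]$, which is orthogonal to $X$ and $Y$ by (1), is a multiple $\mu Z$ of $Z$ whenever $X,Y$ span a nondegenerate plane. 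Then $\langle Z,[Y,X]\rangle=-\mu\langle Z,Z\rangle$, and the two sides agree. The main obstacle is the degenerate configurations: $Z$ null, or $X,Y$ spanning a degenerate plane or being linearly dependent. There I would either check directly that both sides vanish (e.g.\ if $X,Y$ are dependent then $[X,Y]=0$ and $\langle Z,[Y,X]\rangle=0$), or conclude by continuity, since both sides are polynomial and the generic configuration is dense in the set of triples with $Z\perp X,Y$.
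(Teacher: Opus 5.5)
Your proof is correct. The paper gives no argument for this proposition: it only records $[X,Y]=2(X\times Y)$ and calls the identities standard, with a citation. Your route is the Lie-theoretic form of the Lorentzian Lagrange and triple-product identities the paper alludes to. It uses ad-invariance of $\langle X,Y\rangle=\tfrac12\trace(XY)$ and the double-bracket rule $[X,[Y,Z]]=4(\langle X,Y\rangle Z-\langle X,Z\rangle Y)$, and the constants and signs you fix are right.

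Two simplifications are worth making.

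First, the double-bracket rule needs no basis check. For traceless $2\times 2$ matrices, Cayley--Hamilton gives $AB+BA=\trace(AB)I=2\langle A,B\rangle I$. Moving $X$ across $YZ$ and $ZY$ with this relation yields the formula in two lines.

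Second, the case analysis or density argument at the end of (3) can be dropped. By the double-bracket rule and $Z\perp X,Y$, we get $[Z,[X,Y]]=4(\langle Z,X\rangle Y-\langle Z,Y\rangle X)=0$. Hence
\[
0=[Z,[Z,[X,Y]]]=4\bigl(\langle Z,Z\rangle[X,Y]-\langle Z,[X,Y]\rangle Z\bigr),
\]
so $\langle Z,[X,Y]\rangle Z=\langle Z,Z\rangle[X,Y]$ for every admissible triple. This turns your formula $[[Z,X],[Z,Y]]=-4\langle Z,[X,Y]\rangle Z$ directly into (3), with no nondegeneracy assumption. Your claims about the degenerate configurations are true: if $X,Y$ span a degenerate plane $P$, then $Z$ and $[X,Y]$ both lie in the null line $P^\perp\subset P$, so both sides vanish. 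The algebraic argument is simply shorter.

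Keep the convention statement you made explicitly. The paper's displayed formula $\tfrac12\trace(X\bar Y)$ on $\slR$ literally equals $-\tfrac12\trace(XY)$, and with that sign items (2)--(4) would each fail by a sign. Your normalization comes from $\langle X,X\rangle=-\det X$ and gives $\langle e_1,e_1\rangle=-1$. That is the convention under which the proposition holds, and the one needed for $e_1\in H^2$ elsewhere in the paper.
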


\subsection{Coordinate frame}In the classical correspondence between surfaces and their Gauss maps, 
it is often assumed that the Gauss map is an immersion. However, this condition 
is not strictly necessary in the present setting. Even when the Gauss map fails 
to be immersive at certain points, it still satisfies the harmonicity equation, 
and the geometric construction can be extended to this degenerate case.

Let $\nu:\Omega\rightarrow\mathbb{H}^2$ be a smooth map and assume there is an immersion $f:\Omega\rightarrow\ads$ with $\langle f^{-1}\dd f,\nu\rangle =0$ and such that $\sff$ pull back to the same Lorentz structure on $\Omega$. The immersion $f$ must be spacelike, thus $\det I>0$. The second fundamental form $\sff$ pulls-back to $\Omega$ as a Lorentz metric at points where $\det \sff<0$. This is equivalent to the condition $K+1<0$ on the extrinsic curvature $K$ of $f$.

Let $F$ be a frame for $\nu$, as introduced in Section~2.3 (see also \cite{MR1630443}). And $(x,y)$ a null coordinate system or, equivalently, an asymptotic coordinate system for $f$. There exist two functions $\omega_1,\omega_2:\Omega \rightarrow \slR$ orthogonal to $e_1$ such that
\beqas
f^{-1}f_x=\Ad_F \omega_1,\quad f^{-1}f_y=\Ad_F \omega_2,\quad \nu=\Ad_F e_1.
\eeqas
In these coordinates $\sff$ is off-diagonal. This implies the existence of a pair of functions $r,s:\Omega\rightarrow \mathbb{R}$ with $r\omega_1=U_\mathfrak{p}$ and $s\omega_2=V_\mathfrak{p}$.
The functions $r$ and $s$ measure the relation between the tangent directions of the surface and the $p$-part of the Maurer--Cartan form of the frame.

The following result characterizes the compatibility between the frame data 
and the harmonicity condition.
\begin{lemma}\label{lemma4.1} The functions $r$ and $s$  satisfy $1+r+s=0$, and the following are equivalent:
\begin{itemize}
    \item $r$ and $s$ are constant,
    \item $\nu$ is harmonic with respect to II.
\end{itemize} 
\end{lemma}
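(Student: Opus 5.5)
The plan is to write the integrability condition of $f$ in terms of the frame $F$ of $\nu$, and then split it into its $\mathfrak{k}$- and $\mathfrak{p}$-components. Set $\theta=f^{-1}\dd f=\Ad_F\omega$ with $\omega=\omega_1\,dx+\omega_2\,dy$. Then $\dd\theta+\theta\wedge\theta=0$ becomes, after applying $\Ad_{F^{-1}}$,
\[
\partial_x\omega_2-\partial_y\omega_1+[U_{\mathfrak k}+U_{\mathfrak p},\omega_2]-[V_{\mathfrak k}+V_{\mathfrak p},\omega_1]+[\omega_1,\omega_2]=0 .
\]
Since $\omega_1,\omega_2\in\mathfrak p$, we have $[\mathfrak k,\mathfrak p]\subset\mathfrak p$ and $[\mathfrak p,\mathfrak p]\subset\mathfrak k$. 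So the $\mathfrak k$-part is $[U_{\mathfrak p},\omega_2]-[V_{\mathfrak p},\omega_1]+[\omega_1,\omega_2]$. Substituting $U_{\mathfrak p}=r\omega_1$ and $V_{\mathfrak p}=s\omega_2$ turns it into $(1+r+s)[\omega_1,\omega_2]=0$. Because $f$ is an immersion, $\omega_1,\omega_2$ are linearly independent in $\mathfrak p$. By Proposition \ref{proplb}(2) this gives $[\omega_1,\omega_2]\neq 0$, hence $1+r+s=0$.

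Before this, I will briefly justify the existence of $r,s$ as a consistency check. The normal is $f\nu$, and $f^{-1}\dd(f\nu)=\theta\nu+\dd\nu$. The term $\langle\theta\nu,\theta\rangle=\tfrac12\trace(\nu\theta^2)$ vanishes because $\theta^2$ is scalar. Hence $\sff=-\langle \dd\nu,\theta\rangle=-\langle[\alpha_{\mathfrak p},e_1],\omega\rangle$. The vanishing of $\sff(\partial_x,\partial_x)$ forces $[U_{\mathfrak p},e_1]\perp\omega_1$ inside the plane $\mathfrak p$. Since $[\cdot,e_1]$ rotates $\mathfrak p$, this means $U_{\mathfrak p}\parallel\omega_1$, and similarly $V_{\mathfrak p}\parallel\omega_2$. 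Moreover $\sff(\partial_x,\partial_y)\neq0$ by the Lorentz condition, so $r,s\neq0$ and $\omega_1=U_{\mathfrak p}/r$, $\omega_2=V_{\mathfrak p}/s$.

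For the equivalence I use the $\mathfrak p$-part, $\partial_x\omega_2-\partial_y\omega_1+[U_{\mathfrak k},\omega_2]-[V_{\mathfrak k},\omega_1]=0$. Substituting gives
\[
\tfrac1s\big(\partial_xV_{\mathfrak p}-[V_{\mathfrak p},U_{\mathfrak k}]\big)-\tfrac1r\big(\partial_yU_{\mathfrak p}-[U_{\mathfrak p},V_{\mathfrak k}]\big)=\tfrac{s_x}{s}\,\omega_2-\tfrac{r_y}{r}\,\omega_1 .
\]
Define $P=\partial_yU_{\mathfrak p}-[U_{\mathfrak p},V_{\mathfrak k}]$ and $Q=\partial_xV_{\mathfrak p}-[U_{\mathfrak k},V_{\mathfrak p}]$. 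By the second equation of the Maurer--Cartan system \eqref{MC-system} for $F$, the two brackets in parentheses are related by $Q=\pm P$. With the correct sign the left side collapses to $-(\tfrac1s+\tfrac1r)P=\tfrac{1}{rs}P$, using $r+s=-1$. By the Lemma of Section 3, $\nu$ is harmonic exactly when the split system holds, i.e.\ when $P=0$ (and then $Q=0$ too). So harmonicity is equivalent to $\tfrac{s_x}{s}\omega_2-\tfrac{r_y}{r}\omega_1=0$. Since $\omega_1,\omega_2$ are independent, this holds iff $s_x=0=r_y$. With $r+s=-1$ we have $r_x=-s_x$ and $s_y=-r_y$, so this is equivalent to $r$ and $s$ being constant.

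The step I expect to be the main obstacle is the sign and orientation bookkeeping that links the bracket terms in the $\mathfrak p$-equation for $f$ to those of \eqref{MC-system}. I must make sure the combination really reduces to a nonzero multiple of the single quantity $P$, rather than to $\tfrac1sQ-\tfrac1rP$ with mismatched signs. I would first recheck this by expanding everything in the basis $e_1,e_2,e_3$, using $[e_1,e_2]=2e_3$ and $[e_3,e_1]=2e_2$. A useful feature of this route is that it relies only on the independence of $\omega_1,\omega_2$, not on $U_{\mathfrak p},V_{\mathfrak p}$. It therefore also covers the degenerate case, where $\nu$ is not an immersion.
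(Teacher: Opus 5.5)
\textbf{There is a genuine gap.} Your overall route is the paper's: you split the compatibility equation for $f^{-1}\dd f=\Ad_F\omega$ into $\mathfrak k$- and $\mathfrak p$-parts, and your derivation of $1+r+s=0$ is exactly the paper's. Your diagonal argument for the existence of $r,s$ is also fine. The equivalence, however, rests on a false claim: the Lorentz condition does \emph{not} give $r,s\neq0$. Your formula $\sff=-\langle\dd\nu,\theta\rangle$ is valid only as a quadratic form. The bilinear form $-\langle \dd N(X),\dd f(Y)\rangle$ also contains $-\langle\theta(X)\nu,\theta(Y)\rangle=-\tfrac12\langle[\theta(X),\nu],\theta(Y)\rangle$, which vanishes for $X=Y$ but not for $(X,Y)=(\partial_x,\partial_y)$. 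Write $c=\langle[\omega_1,e_1],\omega_2\rangle\neq0$. Then $\langle\nu_x,f^{-1}f_y\rangle=rc$ but $\langle\nu_y,f^{-1}f_x\rangle=-sc$, and the correct value is the symmetrization $\sff(\partial_x,\partial_y)=-\tfrac12(r-s)c=-\tfrac12(2r+1)c$. So nondegeneracy of $\sff$ means $2r+1\neq0$ and says nothing about $rs$. In fact $rs=0$ is precisely where $\nu$ fails to be an immersion, and this really occurs: for example $\nu_y\equiv0$, $s\equiv0$, the degenerate case treated later in the paper, where $\sff$ is still Lorentzian. Dividing by $r$ and $s$ therefore discards exactly that case, so your closing claim that the argument covers it is backwards.

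The sign you left open also goes the wrong way. The bracket in your first parenthesis is $\partial_xV_{\mathfrak p}+[U_{\mathfrak k},V_{\mathfrak p}]$, and the $\mathfrak p$-part of the Maurer--Cartan equation for $F$ says it equals $+P$. So the left side is $(\tfrac1s-\tfrac1r)P=\tfrac{2r+1}{rs}P$, not $\tfrac1{rs}P$. Consequently the implication ``$r$ constant $\Rightarrow$ harmonic'' needs $2r+1\neq0$, which is exactly the fact your computation of $\sff$ lost.

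Both defects disappear if you multiply the $\mathfrak p$-equation by $rs$ instead of dividing. This gives, on all of $\Omega$, the identity $(2r+1)P=rs_x\,\omega_2-sr_y\,\omega_1$. If $\nu$ is harmonic, then $P=0$, hence $rr_x=0=ss_y$. Thus $r_x=0$ on the dense set $\{r\neq0\}\cup\mathrm{int}\{r=0\}$, hence everywhere by continuity, and likewise $r_y=-s_y=0$. Conversely, if $r$ is constant, then $2r+1\neq0$ forces $P=0$, and the Maurer--Cartan equation for $F$ gives the other split equation. The paper instead argues on $\{r\neq0,-1\}$ with an open--closed argument and treats constant $r\in\{0,-1\}$ separately, since $\nu$ then depends on one null coordinate. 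It also divides by $2r+1$, which is justified by the same nondegeneracy of $\sff$.
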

\begin{proof}
    Compute the second derivatives
    \beqas
    (f^{-1}f_x)_y&=&\Ad_F([V_\mathfrak{p}+V_\mathfrak{k},\omega_1]+\partial_y \omega_1)\\
    &=&\Ad_F(s[\omega_2,\omega_1]+[V_\mathfrak{k},\omega_1]+\partial_y \omega_1),
    \eeqas
    and
    \beqas
    (f^{-1}f_y)_x&=&\Ad_F(r[\omega_1,\omega_2]+[U_\mathfrak{k},\omega_2]+\partial_x\omega_2).
    \eeqas
    Then the equation
    \beqa\label{compatibility}
    (f^{-1}f_x)_y-(f^{-1}f_y)_x=[f^{-1}f_x,f^{-1}f_y],
    \eeqa
    turns into
    \beqas
    -s[\omega_1,\omega_2]+[V_\mathfrak{k},\omega_1]+\partial_y\omega_1-r[\omega_1,\omega_2]-[U_\mathfrak{k},\omega_2]-\partial_x\omega_2=[\omega_1,\omega_2],
    \eeqas
    thus
    \beqas
   [V_\mathfrak{k},\omega_1]+\partial_y\omega_1-[U_\mathfrak{k},\omega_2]-\partial_x\omega_2=(1+r+s)[\omega_1,\omega_2].
    \eeqas
    Notice that the rhs is pararel to $e_1$ and the lhs is orthogonal to it. Therefore the previous equation is equivalent to the system
    \beqa\label{mixedpd}
    \left\{\begin{matrix}
         [V_\mathfrak{k},\omega_1]+\partial_y\omega_1-[U_\mathfrak{k},\omega_2]-\partial_x\omega_2&=&0,\\
         (1+r+s)[\omega_1,\omega_2]&=&0.
    \end{matrix}\right.
    \eeqa
    The second equation already gives $1+r+s=0$.

    To prove the doubble implication assume that $\nu$ is harmonic, by Lemma 3.1 this is equivalent to the split Maurer--Cartan system. Assume also there is a point $p\in\Omega$ such that $r(p)\neq 0,-1$. By continuity there is a neighbourhood of $p$ where $r\neq 0,-1$ (if this point does not exist the $r$ must be constant). In this neighbourhood we have that $r\cdot s\neq 0$ so we can multiply the first equation of \eqref{mixedpd} by $rs$ to obtain
    \beqa\label{eq2.2}
    s([V_\mathfrak{k},U_\mathfrak{p}]+r\partial_y\omega_1)-r([U_\mathfrak{k},V_\mathfrak{p}]+s\partial_x\omega_2)=0.
    \eeqa
    Substituting the expressions
    \beqas
    U_\mathfrak{p}=r\omega_1,\quad V_\mathfrak{p}=s\omega_2,
    \eeqas
    into the last two equations of the split Maurer--Cartan system, yields
    \beqas
    \begin{matrix}
    [U_\mathfrak{p},V_\mathfrak{k}]&=&r_y\omega_1+r\partial_y\omega_1,\\
    [U_\mathfrak{k},V_\mathfrak{p}]&=&-s_x\omega_2-s\partial_x\omega_2.
    \end{matrix}
    \eeqas
    And substituting this in \eqref{eq2.2}
    \beqas
    sr_y\omega_1-rs_x\omega_2=0\Rightarrow r_y=s_x=0.
    \eeqas
    The other partial derivative of $r$ is
    \beqas
    r_x=-\partial_x(1+s)=-s_x=0,
    \eeqas
    thus $r$, and therefore $s$, are constant in this neighbourhood of $p$. This shows that the set $r^{-1}(r(p))$ is both open and closed in $\Omega$ so it has to be the whole thing and $r,s$ are constant.

    Assume now that $r$ (and therefore $s=-(r+1)$) is constant. If this constant is either 0 or -1 that implies $\nu_x=0$ or $\nu_y=0$ so $\nu$ is trivially harmonic. Otherwise the first equation of \eqref{mixedpd} is
    \beqas
    \frac{1}{r}([V_\mathfrak{k},U_\mathfrak{p}]+\partial_y U_\mathfrak{p})+\frac{1}{r+1}([U_\mathfrak{k},V_\mathfrak{p}]+\partial_xV_\mathfrak{p})=0.
    \eeqas
    And from the second equation of \eqref{MC-system} we get that
    \beqas
    [U_\mathfrak{k},V_\mathfrak{p}]+\partial_xV_\mathfrak{p}=[V_\mathfrak{k},U_\mathfrak{p}]+\partial_y U_\mathfrak{p}.
    \eeqas
    Substituting in the previous equation yields
    \beqas
    \frac{2r+1}{r(r+1)}([U_\mathfrak{k},V_\mathfrak{p}]+\partial_xV_\mathfrak{p})=0\Rightarrow  [U_\mathfrak{k},V_\mathfrak{p}]+\partial_xV_\mathfrak{p}=0
    \eeqas
    thus $\nu$ is harmonic.
\end{proof}

In particular, this shows that the harmonicity of the Gauss map imposes strong 
restrictions on the geometry of the immersion, linking analytic properties 
to curvature conditions.
\begin{itemize}
    \item $U_\mathfrak{p}$ and $V_\mathfrak{p}$ are linearly independent. This is the same as saying that $\nu$ is an immersion since
    \beqas
    \nu_x=\Ad_F[U_\mathfrak{p},e_1]\quad\text{and}\quad \Ad_F[V_\mathfrak{p},e_1],
    \eeqas
    \item $U_\mathfrak{p}\neq 0$ and $V_\mathfrak{p}=0$ or viceversa. In this case $\nu$ is not an immersion and we have $\nu_x\neq 0$ and $\nu_y=0$.
\end{itemize}
This is a significant difference with the Euclidean case, where if the Gauss map of an immersion is harmonic with respect to the metric given by $\sff$ then it is automatically an immersion. This dichotomy will be fundamental in what follows, leading to two distinct types of geometric behaviour.

\subsubsection{Case 1: $\nu$ is an immersion} The previous discussion shows that in this case we must have $r\neq 0,-1$, therefore
\beqa\label{f1}
f^{-1}f_x=\frac{1}{r}\Ad_FU_\mathfrak{p},\quad f^{-1}f_y=-\frac{1}{r+1}\Ad_FV_\mathfrak{p}, \quad \nu=\Ad_F e_1.
\eeqa
A straightforward computation shows that
\beqas
\fff=\frac{1}{r^2}\langle U_\mathfrak{p},U_\mathfrak{p}\rangle \dd x^2-\frac{2}{r(r+1)}\langle U_\mathfrak{p},V_\mathfrak{p}\rangle \dd x\dd y +\frac{1}{(r+1)^2}\langle V_\mathfrak{p},V_\mathfrak{p}\rangle\dd y^2,
\eeqas
and
\beqas
\sff=-\frac{2r+1}{r(r+1)}\langle [U_\mathfrak{p},V_\mathfrak{p}],\nu\rangle \dd x\dd y.
\eeqas
Therefore the curvature is
\beqas
K+1=\frac{\det II}{\det I}=-(2r+1)^2.
\eeqas

This together with Lemma \ref{lemma4.1} shows:
\begin{theorem}\label{th1}
    Let $\Omega\subset\mathbb{R}^2$ be a domain and
    \beqas
    f:\Omega\longrightarrow \ads
    \eeqas
    a spacelike immersion with negative intrinsiccurvature and such that its Gauss map $\nu$ is also an immersion. Then the following are equivalent:
    \begin{itemize}
        \item The cuvature is constant,
        \item $\nu$ is harmonic with respect to $II$.
    \end{itemize}
\end{theorem}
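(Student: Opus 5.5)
The plan is to derive the equivalence from Lemma~\ref{lemma4.1} combined with the curvature formula $K+1=-(2r+1)^2$ computed just before the statement. Take asymptotic (null) coordinates $(x,y)$ for $\sff$. The hypothesis $K<0$ gives $K+1<1$, and we need $K+1<0$ for $\sff$ to be Lorentzian; I will read the hypothesis as this regime (extrinsic curvature $K+1<0$). Because $\nu$ is an immersion, we are in Case~1. There $U_\mathfrak{p}$ and $V_\mathfrak{p}$ are linearly independent, so neither $r$ nor $s=-(1+r)$ can vanish, and the representation \eqref{f1} holds pointwise with smooth $r\neq 0,-1$. It must be checked that $r$ is smooth and well defined even when it is not constant. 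This follows from $r\omega_1=U_\mathfrak{p}$ with $U_\mathfrak{p}\neq0$: $r$ is then a ratio of smooth nonvanishing components, and $\omega_1\neq 0$ since $f$ is an immersion.

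For the forward direction, suppose $K$ is constant. Then $(2r+1)^2$ is constant. Since $r$ is continuous and $\Omega$ is connected, $2r+1$ is a continuous function whose square is constant, so it is constant whenever that constant is nonzero. The value $2r+1=0$ would give $\sff=0$, which contradicts $\det\sff<0$. In either case $r$ is constant, so $s$ is constant as well, and Lemma~\ref{lemma4.1} gives that $\nu$ is harmonic with respect to $\sff$.

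For the converse, suppose $\nu$ is harmonic. Lemma~\ref{lemma4.1} then gives that $r$ is constant, so $K+1=-(2r+1)^2$ is constant.

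The main obstacle is justifying the formula $K+1=\det\sff/\det\fff=-(2r+1)^2$. I would verify it by computing $\det\fff$ from the displayed $\fff$. The idea is to use Proposition~\ref{proplb}(2) with $\langle U_\mathfrak{p},V_\mathfrak{p}\rangle$ and $[U_\mathfrak{p},V_\mathfrak{p}]\parallel e_1$, which shows that $\det\fff$ equals $-\frac{1}{4r^2(r+1)^2}\langle[U_\mathfrak{p},V_\mathfrak{p}],e_1\rangle^2$. Comparing this with $\det\sff=-\frac{(2r+1)^2}{4r^2(r+1)^2}\langle[U_\mathfrak{p},V_\mathfrak{p}],e_1\rangle^2$ yields the ratio.

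Two further points need checking along the way. First, the Gauss equation in $\ads$ must be used with the paper's sign convention, so that the constancy of intrinsic curvature is the same as the constancy of $K+1$. Second, the nondegeneracy condition $[U_\mathfrak{p},V_\mathfrak{p}]\neq0$ must actually follow from $\nu$ being an immersion.
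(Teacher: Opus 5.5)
Your argument is the paper's: Theorem~\ref{th1} is obtained there simply by combining $K+1=\det\sff/\det\fff=-(2r+1)^2$ with Lemma~\ref{lemma4.1}. Your additions (smoothness of $r$, and connectedness to pass from constancy of $(2r+1)^2$ to constancy of $r$) make explicit what the paper leaves implicit.

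There is one slip in your optional check of that formula, though it does not affect the equivalence, which only uses constancy of $(2r+1)^2$. Because $e_1$ is timelike, $\langle[U_\mathfrak{p},V_\mathfrak{p}],[U_\mathfrak{p},V_\mathfrak{p}]\rangle=-\langle[U_\mathfrak{p},V_\mathfrak{p}],e_1\rangle^2$. Hence $\det\fff=+\frac{1}{4r^2(r+1)^2}\langle[U_\mathfrak{p},V_\mathfrak{p}],e_1\rangle^2>0$, as spacelikeness requires, and only with this sign does the ratio come out as $-(2r+1)^2$ rather than $+(2r+1)^2$.
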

This result can be seen as a Lorentzian analogue of classical characterizations 
of surfaces via their Gauss map (see, e.g., \cite{ruh1970tension}).

We now prove a converse for the construction \eqref{f1}, showing that every harmonic Gauss map 
gives rise to a corresponding spacelike immersion.
\begin{proposition}
    Let $\Omega\subset \mathbb{R}^{1,1}$ be a simply connected domain,
    \beqas
    \nu:\Omega\longrightarrow\mathbb{H}^2,
    \eeqas
    a harmonic immersion and $F$ an $\SLR$-frame for $\nu$. Consider null coordinates in $(x,y)$ in $\Omega$. Then for every $r\neq 0,-1$ there exists a spacelike immersion (unique up to an initial condition)
    \beqas
    f:\Omega\longrightarrow \ads,
    \eeqas
    of constant curvature
    \beqas
    K+1=-(2r+1)^2,
    \eeqas
    and such that $\nu$ is its Gauss map. In particular $f$ satisfies
    \beqas
    f^{-1}f_x=\frac{1}{r}\Ad_FU_\mathfrak{p},\quad f^{-1}f_y=-      \frac{1}{r+1}\Ad_FV_\mathfrak{p}, \quad \nu=\Ad_F e_1.
    \eeqas
\end{proposition}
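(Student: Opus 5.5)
The plan is to define the $\slR$-valued $1$-form
\[
\beta = \frac{1}{r}\Ad_F U_\mathfrak{p}\,\dd x - \frac{1}{r+1}\Ad_F V_\mathfrak{p}\,\dd y
\]
and to show that it satisfies the Maurer--Cartan equation $\dd\beta + \beta\wedge\beta = 0$. Since $\Omega$ is simply connected, integrating $f^{-1}\dd f = \beta$ with a prescribed value $f(p_0)\in\SLR$ then gives $f:\Omega\to\SLR=\ads$, unique up to this initial condition (equivalently, up to left translation). In the notation of Lemma~\ref{lemma4.1}, this amounts to setting $\omega_1 = U_\mathfrak{p}/r$ and $\omega_2 = V_\mathfrak{p}/s$ with $s=-(r+1)$, so that $1+r+s=0$ holds automatically.

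\textbf{Integrability.} The integrability condition is exactly \eqref{compatibility}. Following the computation in the proof of Lemma~\ref{lemma4.1}, it splits into the two equations of \eqref{mixedpd}. The second equation holds because $1+r+s=0$. For the first, the constant $r$ can be pulled out, and it becomes
\[
\frac{1}{r}\bigl([V_\mathfrak{k},U_\mathfrak{p}]+\partial_yU_\mathfrak{p}\bigr)+\frac{1}{r+1}\bigl([U_\mathfrak{k},V_\mathfrak{p}]+\partial_xV_\mathfrak{p}\bigr)=0 .
\]
Harmonicity of $\nu$ gives the split Maurer--Cartan system of Lemma~3.1, namely $\partial_yU_\mathfrak{p}=[U_\mathfrak{p},V_\mathfrak{k}]$ and $\partial_xV_\mathfrak{p}=[U_\mathfrak{k},V_\mathfrak{p}]$. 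Hence the first bracket vanishes identically. The second bracket, however, becomes $2[U_\mathfrak{k},V_\mathfrak{p}]$ rather than $0$, so a sign convention must be fixed before this step closes. I would recheck the convention that $\Ad_F$ differentiates with $[\alpha,\cdot]$, i.e.\ $\partial_y(\Ad_F\omega)=\Ad_F([V,\omega]+\partial_y\omega)$. With that convention, the same expansion used in the lemma reduces the first equation of \eqref{mixedpd} to a combination of the $\lambda$- and $\lambda^{-1}$-coefficients of the split system, and these vanish. This sign bookkeeping, which reconciles the lemma's expansion with the split system, is the step I expect to need the most care. The fallback is the converse direction of Lemma~\ref{lemma4.1}, which already derived harmonicity from this same equation when $r$ is constant; reversing those steps should give the equation from harmonicity.

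\textbf{Immersion and Gauss map.} Since $\nu$ is an immersion, $U_\mathfrak{p}$ and $V_\mathfrak{p}$ are linearly independent, because $\nu_x=\Ad_F[U_\mathfrak{p},e_1]$ and $\nu_y=\Ad_F[V_\mathfrak{p},e_1]$. As $r\neq 0,-1$, the coefficients $1/r$ and $-1/(r+1)$ are finite and nonzero, so $f_x$ and $f_y$ are independent and $f$ is an immersion. Both $U_\mathfrak{p}$ and $V_\mathfrak{p}$ lie in $\mathfrak p = e_1^{\perp}$, so $\langle f^{-1}\dd f,\nu\rangle=0$; hence $\nu$ is the (left-translated) Gauss map. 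Because $\nu$ is timelike in $\slR$, the tangent plane $\nu^\perp$ is spacelike, and therefore $f$ is spacelike.

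\textbf{Curvature.} The formulas for $\fff$ and $\sff$ computed just before Theorem~\ref{th1} apply verbatim and give $K+1=\det\sff/\det\fff=-(2r+1)^2$, which is constant. This also confirms that $(x,y)$ are asymptotic coordinates, so the Lorentz structure induced by $\sff$ agrees with the given one on $\Omega$, as in the setting of Lemma~\ref{lemma4.1}.
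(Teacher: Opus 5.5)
Your proof is correct, but the integrability step takes a different route from the paper. The paper never returns to the frame. It writes the same form as $\beta=\frac{1}{4r}[\nu,\nu_x]\,\dd x-\frac{1}{4(r+1)}[\nu,\nu_y]\,\dd y$, which agrees with yours since $[\nu,\nu_x]=4\Ad_F U_{\mathfrak p}$ and $[\nu,\nu_y]=4\Ad_F V_{\mathfrak p}$. It then drops the second-order terms using $[\nu,\nu_{xy}]=0$ and uses Proposition~\ref{proplb}(3) to get $[[\nu,\nu_x],[\nu,\nu_y]]=4[\nu_x,\nu_y]$. The Maurer--Cartan expression becomes $(a+b+4ab)[\nu_x,\nu_y]$ with $a=\frac{1}{4r}$ and $b=-\frac{1}{4(r+1)}$, and $a+b+4ab=0$.

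You instead reuse the splitting \eqref{mixedpd}. The sign clash you found is genuine, but it does not come from the $\Ad_F$-differentiation convention. Lemma~\ref{lemma4.1} already uses the correct rule $\partial_y(\Ad_F\omega)=\Ad_F([V,\omega]+\partial_y\omega)$. The error is in the $\lambda^{-1}$-equation as printed in Lemma~3.1. Expanding $\partial_y\hat{U}-\partial_x\hat{V}=[\hat{U},\hat{V}]$ with $\hat{U}=U_{\mathfrak k}+\lambda U_{\mathfrak p}$ and $\hat{V}=V_{\mathfrak k}+\lambda^{-1}V_{\mathfrak p}$ gives $\partial_xV_{\mathfrak p}=[V_{\mathfrak p},U_{\mathfrak k}]$, not $[U_{\mathfrak k},V_{\mathfrak p}]$. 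The printed version is in fact inconsistent with the $\mathfrak p$-part of \eqref{MC-system}. With the corrected sign, both brackets in your displayed equation vanish.

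Your fallback avoids the issue entirely. The $\mathfrak p$-part of \eqref{MC-system} says both brackets equal a common $P=[V_{\mathfrak k},U_{\mathfrak p}]+\partial_yU_{\mathfrak p}$. So the equation reads $\frac{2r+1}{r(r+1)}P=0$. Moreover, $P=0$ is the correctly printed $\lambda$-equation, and it is equivalent to harmonicity because $[\nu,\nu_{xy}]=4\Ad_F P$.

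The paper's computation is shorter and frame-free, which is exactly why it never runs into the misprinted split system. Your route buys some structural information:
\begin{itemize}
\item it shows the proposition is the exact converse of Lemma~\ref{lemma4.1};
\item it shows harmonicity enters only through $P=0$, so for $r=-\frac{1}{2}$ integrability holds for any $\nu$;
\item it explicitly justifies that $f$ is spacelike, since $\Ad_F\mathfrak p=\nu^\perp$ is spacelike, a point the paper leaves implicit.
\end{itemize}

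One small caveat: at $r=-\frac{1}{2}$ you get $\sff\equiv 0$. So your closing remark that $\sff$ induces the given Lorentz structure needs $r\neq-\frac{1}{2}$, although this is not part of the statement.
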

\begin{proof}
    Start by considering the auxiliary $\slR$-valued 1-form
    \beqas
    \beta=\frac{1}{4r}[\nu,\nu_x]\dd x-\frac{1}{4(r+1)}[\nu,\nu_y]\dd y.
    \eeqas
    This form satisfies the Maurer-Cartan equation which ensures integrability of the system (see, e.g., \cite{MR900587})
    \beqas
    \dd \beta+\frac{1}{2}[\beta,\beta]&=&\frac{1}{4}(\frac{1}{r}[\nu_y,\nu_x]+\frac{1}{r+1}[\nu_x,\nu_y]+\frac{1}{4r(r+1)}[[\nu,\nu_x], [\nu,\nu_y]])\\
    &=&\frac{1}{4}(\frac{1}{r(r+1)}[\nu_x,\nu_y]+\frac{1}{r(r+1)}[\nu_x,\nu_y])=0.
    \eeqas
    Since we chose $\Omega$ simply connected this means that $\beta$ is integrable, that is, there exists a smooth function
    \beqas
    f:\Omega\longrightarrow\ads,
    \eeqas
    such that $f^{-1}\dd f=\beta$. This implies that
    \beqas
    f^{-1}f_x=\frac{1}{4r}[\nu,\nu_x]=\frac{1}{r}\Ad_FU_\mathfrak{p},\quad f^{-1}f_y=-\frac{1}{4(r+1)}[\nu,\nu_y]=-\frac{1}{r+1}\Ad_FV_\mathfrak{p}.
    \eeqas
    Which immediately shows that $f$ is an immersion, $\nu$ is its Gauss map ($\langle f^{-1}\dd f,\nu\rangle=0$) and
    \beqas
    K+1=-(2r+1)^2.
    \eeqas
\end{proof}
This shows that there is exactly one immersion $f_r$ (up to an initial condition) for each $r\neq 0,-1$ that satisfies the equations $\eqref{f1}$. And these are all the immersion that have $\nu$ as a Gauss map and such that asymptotic coordinates for $f_r$ are null for $\nu$.
In this case, the vectors $[\nu,\nu_x]$ and $[\nu,\nu_y]$ form a basis of the tangent space, which allows us to reconstruct the immersion explicitly. In particular, the immersion condition on $\nu$ ensures that the above construction produces all spacelike immersions having $\nu$ as Gauss map and compatible asymptotic coordinates. This reconstruction formula is a geometric counterpart of the DPW construction 
for harmonic maps, now expressed in terms of surface theory.
\begin{example}
We now apply this construction to the explicit harmonic map obtained in Section 4.
For the harmonic map constructed in Section~4 we have
\[
\nu = \operatorname{Ad}_F e_1, \qquad
F(x,y) =
\begin{pmatrix}
\dfrac{1}{1-xy} & \dfrac{x}{1-xy} \\
\dfrac{y}{1-xy} & \dfrac{1}{1-xy}
\end{pmatrix}.
\]

We first compute the Maurer--Cartan form of $F$. A direct calculation gives
\[
F^{-1} =
\begin{pmatrix}
1 & -x \\
- y & 1
\end{pmatrix},
\]
and
\[
F_x =
\frac{1}{(1-xy)^2}
\begin{pmatrix}
y & 1 \\
y^2 & y
\end{pmatrix},
\qquad
F_y =
\frac{1}{(1-xy)^2}
\begin{pmatrix}
x & x^2 \\
1 & x
\end{pmatrix}.
\]

Thus
\[
F^{-1}F_x =
\frac{1}{1-xy}
\begin{pmatrix}
0 & 1 \\
0 & 0
\end{pmatrix}
= \frac{1}{1-xy} E_+,
\]
\[
F^{-1}F_y =
\frac{1}{1-xy}
\begin{pmatrix}
0 & 0 \\
1 & 0
\end{pmatrix}
= \frac{1}{1-xy} E_-.
\]

Recalling that
\[
E_+ = \tfrac12(e_2 + e_3), \qquad
E_- = \tfrac12(e_2 - e_3),
\]
we obtain
\[
U_{\mathfrak{p}} = \frac{1}{2(1-xy)}(e_2 + e_3), \qquad
V_{\mathfrak{p}} = \frac{1}{2(1-xy)}(e_2 - e_3).
\]

We now compute the commutators appearing in Proposition~5.4. A direct computation yields
\[
[\nu,\nu_x] = 2(1-xy) E_+, \qquad
[\nu,\nu_y] = 2(1-xy) E_-.
\]

Substituting into the formula
\[
f^{-1} df =
\frac{1}{4r}[\nu,\nu_x]\,dx
-
\frac{1}{4(r+1)}[\nu,\nu_y]\,dy,
\]
we obtain
\[
f^{-1} df =
\frac{1}{2r} E_+\, dx
-
\frac{1}{2(r+1)} E_-\, dy.
\]

This system can be integrated explicitly, since $E_+^2 = E_-^2 = 0$. We obtain
\[
f(x,y) =
\exp\left(\frac{x}{2r} E_+\right)
\exp\left(-\frac{y}{2(r+1)} E_-\right).
\]

Using the nilpotency of $E_\pm$, we have
\[
\exp(aE_+) = I + aE_+, \qquad
\exp(bE_-) = I + bE_-,
\]
and therefore
\[
f(x,y)
=
\left(I + \frac{x}{2r} E_+\right)
\left(I - \frac{y}{2(r+1)} E_-\right).
\]

Multiplying, we obtain the explicit expression
\[
f(x,y)
=
\begin{pmatrix}
1 - \dfrac{xy}{4r(r+1)} & \dfrac{x}{2r} \\
-\dfrac{y}{2(r+1)} & 1
\end{pmatrix}.
\]
This provides an explicit family of spacelike surfaces in $H^3_1$ with constant curvature $K$ satisfying
\[
K+1 = -(2r+1)^2.
\]
These surfaces are parametrized by null coordinates, since the coordinate vector fields correspond to lightlike directions. In particular, they are generated by the action of two one-parameter unipotent subgroups of $\mathrm{SL}(2,\mathbb{R})$, reflecting the algebraic structure of the construction.

The parameter $r$ determines the curvature, and varying $r$ produces a one-parameter family of spacelike surfaces with constant curvature $K<-1$.

Finally, we note that the expression degenerates along the set $xy=1$, which corresponds to the boundary of the domain where the construction is defined.
\end{example}

\subsubsection{Case 2: $\nu$ is not an immersion} We now discuss the degenerate case where $\nu$ fails to be an immersion, 
which has no direct analogue in the classical Riemannian setting. In contrast with the previous case, the vectors $[\nu,\nu_x]$ and $[\nu,\nu_y]$ are linearly dependent, and therefore the construction of Proposition~5.4 no longer applies.

However, this does not imply that there are no spacelike immersions having $\nu$ as Gauss map. It only shows that such immersions cannot, in general, be obtained by the previous method.
Without loss of generality we consider only the case $r=-1$. Then
\beqas
f^{-1}f_x=-\Ad_FU_\pp=-\frac{1}{4}[\nu,\nu_x], \quad f^{-1}f_y=[\nu,\omega],
\eeqas
for a certain $\omega:\Omega \rightarrow \slR$. This function must be orthogonal to $\nu$ and not parallel to $\nu_x$. Also the compatibility equation \eqref{compatibility} yields $[\nu,\omega_x]=0$.

\begin{proposition}\label{prop4.4}
    Let $\Omega\subset\mathbb{R}^{1,1}$ be a simply connected domain,
    \beqas
    \nu:\Omega\longrightarrow\mathbb{H}^2,
    \eeqas
    a smooth map and $F$ an $\SLR$-frame for $\nu$. Consider a set of null coordinates $(x,y)$ in $\Omega$ and assume $\nu_x\neq 0$ and $\nu_y=0$. Then for every smooth function $\omega:\Omega\rightarrow \slR$ satisfying $\langle\omega,\nu\rangle =0$, $[\nu_x,\omega]\neq 0$ and $\omega_x=0$ there exists a spacelike immersion (unique up to initial condition)
    \beqas
    f:\Omega\longrightarrow\ads,
    \eeqas
    of constant curvature $-1$ and such that $\nu$ is its Gauss map. In particular we have
    \beqas
    f^{-1}f_x=-\Ad_FU_\pp, \quad f^{-1}f_y=[\nu,\omega],\quad \nu=\Ad_Fe_1.
    \eeqas
\end{proposition}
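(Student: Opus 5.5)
The proof will mirror the nondegenerate case: define the $\slR$-valued $1$-form
\[
\beta=-\tfrac14[\nu,\nu_x]\,\dd x+[\nu,\omega]\,\dd y,
\]
show that it satisfies the Maurer--Cartan equation $\dd\beta+\tfrac12[\beta,\beta]=0$, and integrate it on the simply connected domain $\Omega$. This gives $f:\Omega\to\ads$ with $f^{-1}\dd f=\beta$, unique once $f$ is prescribed at one point. First, as in the Example of Section~5, I will use $U_\pp=\tfrac14\Ad_F^{-1}[\nu,\nu_x]$, which follows from $\nu_x=\Ad_F[U_\pp,e_1]$ and Proposition \ref{proplb}. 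This turns the first component into $-\Ad_FU_\pp$, as required in the statement.

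The integrability check is where all the hypotheses enter, so it is the main computation. I need
\[
\partial_x[\nu,\omega]-\partial_y\bigl(-\tfrac14[\nu,\nu_x]\bigr)=-\tfrac14\bigl[[\nu,\nu_x],[\nu,\omega]\bigr].
\]
On the left, $\nu_y=0$ gives $\partial_y[\nu,\nu_x]=[\nu,\nu_{xy}]=0$, since $\nu_{xy}=\partial_x\nu_y=0$. Likewise $\omega_x=0$ gives $\partial_x[\nu,\omega]=[\nu_x,\omega]$. On the right, $\nu$ is orthogonal to both $\nu_x$ (because $\langle\nu,\nu\rangle=-1$) and $\omega$, so Proposition \ref{proplb}(3) with $Z=\nu$, $\langle\nu,\nu\rangle=-1$ gives $[[\nu,\nu_x],[\nu,\omega]]=4[\nu_x,\omega]$. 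The right-hand side is therefore $-[\nu_x,\omega]$, while the left-hand side is $+[\nu_x,\omega]$. This sign mismatch is the step I expect to be the main obstacle. Before anything else I will recheck the sign conventions of the compatibility equation \eqref{compatibility} ($\dd\beta+\beta\wedge\beta$ versus $\dd\beta+\tfrac12[\beta,\beta]$) and of Proposition \ref{proplb}(3). If the mismatch survives, I will fix it by replacing $\omega$ with a suitable multiple, or by using $[\omega,\nu]$ in place of $[\nu,\omega]$. This is harmless, because the hypotheses on $\omega$ are invariant under such a change and the statement only requires \emph{some} such $\omega$.

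Once $f$ exists, the remaining properties are short. Immersion and spacelikeness: $[\nu,\nu_x]$ and $[\nu,\omega]$ both lie in $\nu^\perp$, which is spacelike because $\nu$ is timelike. They are linearly independent: by Proposition \ref{proplb}(3) their bracket is a multiple of $[\nu_x,\omega]\neq0$, so neither is a multiple of the other. Hence $f$ is a spacelike immersion. Gauss map: $\langle f^{-1}\dd f,\nu\rangle=0$ by Proposition \ref{proplb}(1), so $\nu$ is the Gauss map.

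Curvature: I will compute $\sff$ through $\dd\nu$. Since $\nu_y=0$, the coefficients $\langle \nu_y,\cdot\rangle$ vanish, so $\sff$ has a zero row and $\det\sff=0$. The formula used in Case 1, $K+1=\det\sff/\det\fff$, then gives $K=-1$. Finally, this is consistent with Case 1: the value $r=-1$ makes $(2r+1)^2=1$ there, but the degenerate shape operator here gives $K+1=0$ directly.
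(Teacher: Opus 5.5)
Your plan is the paper's: the same $\beta$, integration on simply connected $\Omega$, and your immersion and Gauss-map checks are fine. The curvature step, however, is wrong. You read $\sff$ off from $\dd\nu$ alone, as if the ambient space were flat. In $\ads\subset\mathbb{R}^{2,2}$ the unit normal of $f$ is the translate $f\nu$, whose derivative is $f(\dd\nu+f^{-1}\dd f\,\nu)$. The extra term (equivalently, the $\tfrac12[\cdot,\cdot]$ in the Levi-Civita connection of the bi-invariant metric) gives
\[
\sff(\partial_i,\partial_j)=-\langle \nu_j,\beta(\partial_i)\rangle+\tfrac12\langle[\beta(\partial_j),\beta(\partial_i)],\nu\rangle ,
\]
which reproduces the paper's Case~1 formula for $\sff$.

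With $\nu_y=0$ this does kill $\sff_{yy}$, and $\sff_{xx}=0$ because $\langle\nu_x,[\nu,\nu_x]\rangle=0$. But the bracket term survives off the diagonal. Proposition~\ref{proplb}(3) gives $[\beta(\partial_y),\beta(\partial_x)]=[\nu_x,\omega]$, hence $\sff=\langle[\nu_x,\omega],\nu\rangle\,\dd x\,\dd y$, exactly the paper's formula. This is nonzero precisely because $[\nu_x,\omega]\neq0$ is a nonzero multiple of $\nu$.

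Parts (2) and (4) of Proposition~\ref{proplb} then give $\det\sff=-\det\fff$, so $\det\sff/\det\fff=-1$, not $0$. This is the $r=-1$ value of $-(2r+1)^2$ from Case~1, which is the consistency check you set aside. The paper's explicit example confirms it: there $\nu_y=0$, yet $f$ is a product of two hyperbolas with principal curvatures $\tan\theta$ and $-\cot\theta$, whose product is $-1$. A degenerate $\sff$ would also contradict the standing assumption that $\sff$ is Lorentzian with $(x,y)$ null. So you reach the number $-1$ only by pairing a false $\det\sff=0$ with $K+1=\det\sff/\det\fff$. What the argument actually shows to be constant and equal to $-1$ is $\det\sff/\det\fff$, which the paper records as $K+1=-1$.

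The sign mismatch in your integrability check comes from your equation, not from $\beta$. For $\beta=P\,\dd x+Q\,\dd y$ one has $\dd\beta+\tfrac12[\beta,\beta]=(Q_x-P_y+[P,Q])\,\dd x\wedge\dd y$, so the condition is $Q_x-P_y=[Q,P]$, not $[P,Q]$. With your own computations both sides equal $[\nu_x,\omega]$, which is the paper's $\dd\beta+\tfrac12[\beta,\beta]=[\nu,\omega_x]=0$.

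Your fallback could not have repaired a genuine mismatch anyway. Both sides are linear in $\omega$, so rescaling $\omega$ or swapping $[\nu,\omega]$ for $[\omega,\nu]$ leaves the sign discrepancy intact. Moreover, the statement holds for every admissible $\omega$ and prescribes $f^{-1}f_y=[\nu,\omega]$, so you are not free to change $\omega$.
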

\begin{proof}
    Consider the auxiliary $\slR$-valued 1-form
    \beqas
    \beta=-\frac{1}{4}[\nu,\nu_x]\dd x+[\nu,\omega]\dd y.
    \eeqas
    It satisfies the Maurer-Cartan equation
    \beqas
    \dd\beta+\frac{1}{2}[\beta,\beta]=[\nu,\omega_x]=0.
    \eeqas
    Thus we can integrate $f^{-1}\dd f=\beta$ to obtain a smooth map $f:\Omega\rightarrow \ads$ satisfying
    \beqas
    f^{-1}f_x=-\Ad_FU_\pp, \quad f^{-1}f_y=[\nu,\omega].
    \eeqas
    The condition $[\nu_x,\omega]\neq 0$ implies that $f$ is an immersion, and $\langle \omega,\nu\rangle=0$ implies $\langle \nu,\dd f\rangle=0$. That is, $\nu$ is the Gauss map for the immersion $f$.

    For the claim on the curvature, compute
    \beqas
    \fff=\frac{1}{4}\langle \nu_x,\nu_x\rangle\dd x^2-2\langle \nu_x,\omega\rangle\dd x\dd y+4\langle \omega,\omega\rangle \dd y^2,
    \eeqas
    and
    \beqas
    \sff=\langle[\nu_x,\omega],\nu\rangle \dd x\dd y.
    \eeqas
    Thus
    \beqas
    K+1=\frac{\det \sff}{\det \fff}=-1.
    \eeqas
\end{proof}
This shows that even when $\nu$ is not an immersion, one can still construct spacelike immersions with prescribed Gauss map, although the structure of the construction differs significantly from the nondegenerate case.
\begin{example}
    Consider the map
    \beqas
    \nu(x,y)=\begin{pmatrix}
        -\sinh x&-\cosh x\\ \cosh x&\sinh x
    \end{pmatrix} :\mathbb{R}^{1,1}\longrightarrow \mathbb{H}^2.
    \eeqas
    We can define an $\SLR$-frame for $\nu$ as
    \beqas
    F=\begin{pmatrix}
        \frac{1}{\sqrt{\cosh x}}&-\frac{\sinh x}{\sqrt{\cosh x}}\\
        0&\sqrt{\cosh x}
    \end{pmatrix}:\mathbb{R}^{1,1}\longrightarrow \SLR.
    \eeqas
    Its Maurer-Cartan form is
    \beqas
    \alpha=F^{-1}\dd F=\frac{1}{2}(e_1\sech x-e_2\sech x+e_3\tanh x)\dd x.
    \eeqas
    Write $\omega=\Ad_F(\omega^1e_1+\omega_2e^2+\omega^3e_3)$. The condition $\langle \omega, \nu\rangle=0$ implies that $\omega_1=0$. The second condition $[\nu,\omega_x]=0$ can be written as the system
    \beqas
    \left\{\begin{matrix}
        \omega_x^2&=&\sech x\cdot \omega^3,\\
        \omega_x^3&=&-\sech x\cdot \omega^2,
    \end{matrix}\right. 
    \eeqas
    which general solution is given by
    \beqas
    \omega^2=-A\tanh x-B\sech x,\quad \omega^3=-A\sech x+B\tanh x.
    \eeqas
    For the last condition we get
    \beqas
    0\neq[\nu_x,\omega]=-2B(\cosh x\cdot e_1+\sinh x\cdot e_3)\Rightarrow B\neq 0.
    \eeqas
    Thus the 1-form $\beta$ from Proposition \ref{prop4.4} is
    \beqas\label{eq4.4}
    \beta=\frac{1}{2}e_2\dd x-2(B\sinh x\cdot e_1-A\cdot e_2+B\cosh x\cdot e_3)\dd y.
    \eeqas
    In order to get a explicit example let $\theta\in(0,\pi)$ and set
    \beqas
    A=\frac{1}{4}\cos(2\theta), \quad B=\frac{1}{4}\sin(2\theta).
    \eeqas
    Then we can integrate the equation $f^{-1}\dd f=\beta$ with initial condition $f(0,0)=\cos\theta\cdot e_0+\sin\theta\cdot e_1$ to obtain 
    \beqas
    f(x,y)=(\cos\theta\cosh\frac{x+y}{2}, \sin\theta\cosh\frac{x-y}{2}, \cos\theta\sinh\frac{x+y}{2}, \sin\theta\sinh\frac{x-y}{2}).
    \eeqas
\end{example}

Summarizing, the correspondence between Lorentz harmonic maps and spacelike immersions in $H^3_1$ exhibits two distinct regimes, depending on whether the Gauss map is an immersion. In the nondegenerate case, the immersion can be reconstructed explicitly via the frame and depends on a single parameter. In the degenerate case, additional data is required, and the construction becomes less rigid.

\section{Geometric Cauchy problem}
We now formulate and solve the geometric Cauchy problem for spacelike surfaces in $\ads$, 
using the correspondence developed in the previous sections.
Let $I\subset\mathbb{R}$ be an interval. The Cauchy data that we need consists of
\beqas
    \Tilde{f}:I\longrightarrow \ads,\quad \Tilde{\nu}:I\longrightarrow \mathbb{H}^2,\quad 0<\rho \neq 1,
\eeqas
where $\Tilde{f}$ is a spacelike curve and $\langle \dd\Tilde{f},\Tilde{\nu}\rangle =0$. We want to find a spacelike surface $f$ of constant extrinsic curvature $-\rho^2$ that contains the curve $\Tilde{f}$ and such that its Gauss map $\nu$ agrees with $\Tilde{\nu}$ along $\Tilde{f}$.

The key idea is to express the geometric conditions in terms of the frame and reduce the problem to the construction of a harmonic map with suitable initial data. Set $(t,s)=(x+y,x-y)/2$ and assume there exists a spacelike immersion $f:I\times I\rightarrow \ads$ with Gauss map $\nu$ such that
\beqas
f(t,0)=\Tilde{f}(t),\quad \nu(t)=\Tilde{\nu}(t,0).
\eeqas
Notice that $f$ solves the geometric Cauchy problem. 

Recall the equations \eqref{f1}
\[
f^{-1}f_x = \frac{\alpha}{2}[\nu,\nu_x],\quad
f^{-1}f_y = \frac{\beta}{2}[\nu,\nu_y],
\quad2\alpha\beta+\alpha+\beta=0.
\]
Where
\beqas
\alpha=\frac{1}{\rho-1}\quad\text{and}\quad \beta=\frac{-1}{\rho+1}.
\eeqas
Then because it is $\partial_t=\partial_x+\partial_y$ we get
\beqa
\left\{\begin{matrix}
f^{-1}f_t&=&\frac{1}{2}[\nu(t),\alpha\nu_x(t)+\beta\nu_y(t)],\\
\nu'(t)&=&\nu_x(t)+\nu_y(t).   
\end{matrix}\right.
\eeqa
Set
\beqas
w(t)=-\frac{1}{2}[\tilde{\nu}(t),\tilde{f}^{-1}\tilde{f}_t]=\alpha \nu_x(t)+\beta\nu_y(t),
\eeqas
The quantity $w(t)$ can be interpreted as a measure of the rotation of the normal along the initial curve. Then
\beqas
\tilde{\nu}'(t)-\frac{w}{\beta}=(1-\frac{\alpha}{\beta})\nu_x(t)\Rightarrow \tilde{\nu}'(t)+(\rho+1)w=\frac{2\rho}{\rho-1}\nu_x(t),
\eeqas
thus
\beqas
\nu_x(t)=\frac{\rho-1}{2\rho}(\tilde{\nu}'(t)+(\rho+1)w).
\eeqas
We can now state the solution to the geometric Cauchy problem.

\begin{corollary}\label{coro1}
    Let $I\subset \mathbb{R}$ be an open interval. Given Cauchy data
    \beqas
    \Tilde{f}:I\longrightarrow \ads,\quad \Tilde{\nu}:I\longrightarrow \mathbb{H}^2,\quad 0<\rho\neq 1,
    \eeqas
    with $\Tilde{f}$ spacelike and
    \beqas
    \langle \Tilde{f}^{-1}\Tilde{f}_t,\Tilde{\nu}\rangle =0,\quad
    \langle \Tilde{\nu}'+(\rho+1)w,\Tilde{\nu}'-(\rho-1))w\rangle\not\equiv 0.
    \eeqas
    This nondegeneracy condition ensures that the resulting harmonic map is an immersion along the initial curve.
    Then there exists a spacelike immersion
    \beqas
    f:I\times I\longrightarrow \ads
    \eeqas
    of constant extrinsic curvature
    \beqas
    K+1=-\rho^2,
    \eeqas
    and such that along the diagonal $x=y$ we have
    \beqas
    f(x,x)=\Tilde{f}(x),\quad \nu(x,x)=\Tilde{\nu}(x),\quad t\in I.
    \eeqas
    
\end{corollary}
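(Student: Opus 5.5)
The plan is to reduce the corollary to the Cauchy problem for Lorentz harmonic maps solved in Section~4, and then to rebuild the surface with the construction of Proposition~5.4.

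\textbf{Step 1: initial data for the harmonic map.} From the data $(\tilde f,\tilde\nu,\rho)$ define
\[
w=-\tfrac12[\tilde\nu,\tilde f^{-1}\tilde f_t], \qquad N_0=\tilde\nu, \qquad N_1=\tfrac{\rho-1}{2\rho}\bigl(\tilde\nu'+(\rho+1)w\bigr).
\]
This is exactly the formula for $\nu_x$ along the diagonal derived just before the statement. Then check the hypotheses of the Cauchy theorem of Section~4.

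\emph{Orthogonality.} Since $\langle\tilde\nu,\tilde\nu\rangle=-1$, we have $\langle\tilde\nu',\tilde\nu\rangle=0$. By Proposition~\ref{proplb}(1), $\langle w,\tilde\nu\rangle=0$. Hence $\langle N_1,N_0\rangle=0$.

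\emph{Nondegeneracy.} Along the diagonal, $\nu_y=\tilde\nu'-N_1$. A short computation gives $\tilde\nu'-N_1=\tfrac{\rho+1}{2\rho}\bigl(\tilde\nu'-(\rho-1)w\bigr)$, so
\[
\langle N_1,N_0'-N_1\rangle=\tfrac{\rho^2-1}{4\rho^2}\,\bigl\langle\tilde\nu'+(\rho+1)w,\ \tilde\nu'-(\rho-1)w\bigr\rangle .
\]
This is not identically zero, by the hypothesis and $\rho\neq1$. By Proposition~\ref{proplb}(2),(4), $w$ is orthogonal to $\tilde\nu$ and spacelike because $\tilde f$ is spacelike; one should also check that $N_1$ is spacelike (possibly after normalizing). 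The theorem of Section~4 then gives a Lorentz harmonic $\nu$ on $I\times I$ with $\nu(t,t)=\tilde\nu(t)$ and $\nu_x(t,t)=N_1(t)$. Nondegeneracy makes $\nu$ an immersion near the diagonal, so we may shrink the domain if needed.

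\textbf{Step 2: the surface.} Choose $r$ with $2r+1=\pm\rho$, namely $r=(\rho-1)/2$. Then $1/(4r)=\alpha/2$ and $-1/(4(r+1))=\beta/2$, consistent with the normalization $2\alpha\beta+\alpha+\beta=0$. Apply Proposition~5.4 to $\nu$ with this $r$. It gives a spacelike immersion $f$ with Gauss map $\nu$ and $K+1=-(2r+1)^2=-\rho^2$. Its only freedom is the initial value, which we fix by $f(x_0,x_0)=\tilde f(x_0)$.

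\textbf{Step 3: recovering the curve (main obstacle).} We must show $f(t,t)=\tilde f(t)$. Restricting $\beta$ to the diagonal gives
\[
f^{-1}\partial_t f=\tfrac12[\nu,\alpha\nu_x+\beta\nu_y].
\]
So both $f(t,t)$ and $\tilde f$ satisfy the same linear ODE with the same initial point, and uniqueness gives $f(t,t)=\tilde f(t)$, provided
\[
\tfrac12[\tilde\nu,w]=\tilde f^{-1}\tilde f_t .
\]
To prove this, first check that $\alpha\nu_x+\beta\nu_y=w$ along the diagonal; this follows by substituting $N_1$ and $\tilde\nu'-N_1$ and simplifying the $\rho$-coefficients. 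Next, $\tilde f^{-1}\tilde f_t$ is orthogonal to $\tilde\nu$, and $\langle\tilde\nu,\tilde\nu\rangle=-1$. Proposition~\ref{proplb}(3) then gives
\[
[\tilde\nu,[\tilde\nu,X]]=4X \quad\text{for } X\perp\tilde\nu,
\]
which yields the identity with the stated signs. The delicate points are the sign conventions in $\alpha,\beta$ and in $w$, where the factor $\tfrac12$ versus $\tfrac14$ must be checked carefully. I would verify them by working in the frame basis $e_1,e_2,e_3$ with $\tilde\nu=e_1$.
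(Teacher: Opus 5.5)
Your proposal follows the paper's proof step for step. You feed the same data $N_0=\tilde\nu$, $N_1=\tfrac{\rho-1}{2\rho}\bigl(\tilde\nu'+(\rho+1)w\bigr)$ into Theorem~4.1. You then apply Proposition~5.4 with $r=(\rho-1)/2$, which reproduces exactly the paper's form $f^{-1}df=\frac{[\nu,\nu_x]}{2(\rho-1)}dx-\frac{[\nu,\nu_y]}{2(\rho+1)}dy$. Finally you use ODE uniqueness along the diagonal with $f(x_0,x_0)=\tilde f(x_0)$. Your direct check that $\alpha\nu_x+\beta\nu_y=w$ on the diagonal is correct. It is in fact more careful than the paper's version, which tacitly uses $\nu_x=\tfrac12\nu_t$ there and drops the $[\nu,\nu_s]$ terms.

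\textbf{Sign correction in Step~3.} For $X\perp\tilde\nu$ with $\langle\tilde\nu,\tilde\nu\rangle=-1$ one has $[\tilde\nu,[\tilde\nu,X]]=-4X$, not $4X$. You can check this on the basis: $[e_1,[e_1,e_2]]=[e_1,2e_3]=-4e_2$, and similarly $[e_1,[e_1,e_3]]=-4e_3$. It is exactly this sign that gives $\tfrac12[\tilde\nu,w]=-\tfrac14[\tilde\nu,[\tilde\nu,X]]=X=\tilde f^{-1}\tilde f_t$. With $+4X$ you would get $-\tilde f^{-1}\tilde f_t$, and the uniqueness argument would not close.

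\textbf{Inherited gap.} Like the paper, you take for granted that the stated inner-product condition makes $\nu$ an immersion near the diagonal. On the diagonal, $\nu_x$ and $\nu_y$ are linearly independent if and only if $\tilde\nu'$ and $w$ are. The hypothesis does not force this: for example, $\tilde\nu'\equiv 0$ satisfies it but gives $\nu_y=-\nu_x$. So that independence is what you actually need for Proposition~5.4 to produce an immersion.
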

This provides a constructive solution to the geometric Cauchy problem, 
extending classical results in integrable surface theory to the Lorentzian setting.
\begin{proof}
Set as before
\beqas
w(t)=-\frac{1}{2}[\Tilde{\nu}(t),\Tilde{f}^{-1}\Tilde{f}_t].
\eeqas
We now translate the geometric Cauchy data into the analytic data required in Theorem~4.1.
\beqas
N_0(t)=\Tilde{\nu}(t),\quad N_1(t)=\frac{\rho -1}{2\rho}(\Tilde{\nu}'(t)+(\rho +1)w(t)).
\eeqas
Then
\beqas
\langle N_1(t),N_0(t)\rangle=\frac{\rho-1}{2\rho}\langle \Tilde{\nu}'(t),\Tilde{\nu}\rangle+\frac{\rho^2-1}{2\rho}\langle w(t),\Tilde{\nu}(t)\rangle=0,
\eeqas
and
\beqas
\langle N_1(t),2N'_0(t)-N_1(t)\rangle =\frac{\rho^2-1}{4\rho^2}\langle \Tilde{\nu}'+(\rho+1)w,\nu'-(\rho-1)w\rangle \not\equiv 0.
\eeqas
Thus we are in the hypothesis of Theorem 4.1. There exists a harmonic map
\beqas
\nu:I\times I\longrightarrow \mathbb{H}^2,
\eeqas
such that
\beqas
\nu(x,x)=N_0(x)=\Tilde{\nu}(x),\quad \nu_x(x,x)=N_1(x)=\frac{\rho -1}{2\rho}(\Tilde{\nu}'(x)+(\rho +1)w(x)).
\eeqas
In particular this means that in coordinates $(t,s)$ in the diagonal $s=0$ we have that
\beqas\label{expw}
\frac{1}{2}\nu_t(t,0)=\nu_x(x,x)=\frac{\rho -1}{2\rho}(\nu_t(t,0)+(\rho +1)w(t))\Rightarrow w(t)=\frac{\nu_t(t,0)}{\rho^2-1}.
\eeqas

Now to obtain the immersion $f$ we follow the construction of Section 5.2. We can integrate the equation
\beqas\label{eqlast}
f^{-1}\dd f=\alpha=\frac{[\nu,\nu_x]}{2(\rho-1)}\dd x-\frac{[\nu,\nu_y]}{2(\rho+1)}\dd y,
\eeqas
with initial condition $f(x_0,x_0)=\Tilde{f}(x_0)$, to obtain a spacelike immersion
\beqas
f:I\times I\longrightarrow \ads,
\eeqas
of constant Gauss curvature
\beqas
K+1=-\left(\frac{\frac{1}{\rho-1}+1}{\frac{1}{\rho-1}}\right)^2=-\rho^2.
\eeqas

To see that it contains the initial curve $\Tilde{f}$ write $\alpha$ in the coordinates $(t,s)$
\beqas
\alpha(t,s)=\frac{[\nu,\nu_t+\nu_s]}{4(\rho-1)}(\dd t+\dd s)-\frac{[\nu,\nu_t-\nu_s]}{4(\rho+1)}(\dd t-\dd s),
\eeqas
hence along the diagonal $s=0$ we have
\beqas
\alpha(t,0)&=&\frac{[\nu,\nu_t]}{4(\rho-1)}\dd t-\frac{[\nu,\nu_t]}{4(\rho+1)}\dd t=\frac{1}{2}\frac{[\nu,\nu_t]}{\rho^2-1}\dd t\\&=&\frac{1}{2}[\nu, w]\dd t=\Tilde{f}^{-1}\Tilde{f}_t\dd t=\Tilde{f}\dd\Tilde{f}.
\eeqas
Both $f$ and $\Tilde{f}$ integrate $\alpha$ along the diagonal and we chose $f$ with initial condition $f(x_0,y_0)=\Tilde{f}(x_0)$. Thus, by uniqueness of solution, it must be $f=\Tilde{f}$ along the diagonal and the proof is complete.

This completes the construction and shows that the geometric Cauchy problem admits a local solution under the stated assumptions.

\end{proof}
This result establishes a direct bridge between the analytic theory of Lorentz harmonic maps 
and the geometry of spacelike surfaces in anti-de Sitter space, providing a unified framework 
for the construction of surfaces from prescribed boundary data.
\begin{example}
We illustrate the geometric Cauchy problem by recovering explicitly the surface constructed in Sections~4 and~5.

We prescribe the initial data along a curve by setting
\[
\tilde f(t)
=
\begin{pmatrix}
1 - \dfrac{t^2}{4r(r+1)} & \dfrac{t}{2r} \\
-\dfrac{t}{2(r+1)} & 1
\end{pmatrix},
\qquad
\tilde \nu(t)
=
\begin{pmatrix}
-\dfrac{2t}{1-t^2} & -\dfrac{1+t^2}{1-t^2} \\
\dfrac{1+t^2}{1-t^2} & \dfrac{2t}{1-t^2}
\end{pmatrix}.
\]

A direct computation shows that $\tilde f(t) \in \mathrm{SL}(2,\mathbb{R})$ and $\tilde \nu(t) \in H^2$. Moreover, one verifies that $\tilde \nu(t)$ is orthogonal to $\tilde f^{-1}\tilde f_t$, so the compatibility condition is satisfied.

\medskip

We compute
\[
\tilde f^{-1}\tilde f_t =
\begin{pmatrix}
0 & \dfrac{1}{2r} \\
-\dfrac{1}{2(r+1)} & 0
\end{pmatrix},
\qquad
w(t) = -\frac12[\tilde \nu, \tilde f^{-1}\tilde f_t].
\]

Using the formulas of Corollary~6.1, we obtain the Cauchy data
\[
N_0(t) = \tilde \nu(t), \qquad N_1(t),
\]
which coincide with those computed in Section~4. In particular, the nondegeneracy condition is satisfied.

\medskip

By Theorem~4.1, the corresponding harmonic map is therefore given explicitly by
\[
\nu(x,y)
=
\begin{pmatrix}
-\dfrac{x+y}{1-xy} & -\dfrac{1+xy}{1-xy} \\
\dfrac{1+xy}{1-xy} & \dfrac{x+y}{1-xy}
\end{pmatrix}.
\]

\medskip

Applying Proposition~5.4, we obtain a one-parameter family of spacelike immersions $f$ with Gauss map $\nu$, depending on the parameter $r$.

To recover the surface solving the geometric Cauchy problem, we fix the value of $r$ corresponding to the prescribed initial data. This yields explicitly
\[
f(x,y)
=
\begin{pmatrix}
1 - \dfrac{xy}{4r(r+1)} & \dfrac{x}{2r} \\
-\dfrac{y}{2(r+1)} & 1
\end{pmatrix},
\]
which satisfies $f(t,t)=\tilde f(t)$.

\medskip

Finally, this surface has constant curvature
\[
K+1 = -(2r+1)^2,
\]
and provides the unique solution to the geometric Cauchy problem with the given initial data.

To visualize the solution, we represent the immersion $f(x,y)$ by projecting its matrix entries onto $\mathbb{R}^3$. This provides an affine model of the surface suitable for graphical representation.
\begin{figure}[H]
\centering

\begin{subfigure}{0.48\textwidth}
    \centering
    \includegraphics[width=\linewidth]{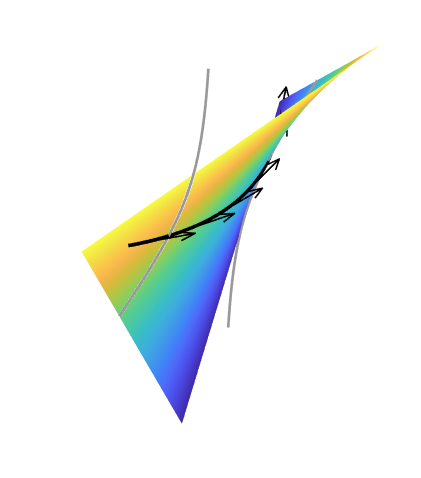}
\end{subfigure}
\hfill
\begin{subfigure}{0.48\textwidth}
    \centering
    \includegraphics[width=\linewidth]{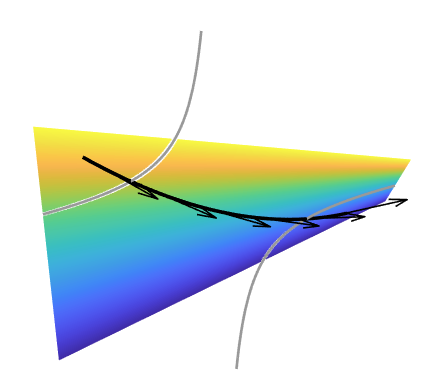}
\end{subfigure}

\caption{
Solution of the geometric Cauchy problem for $r=2$. 
The black curve represents the initial data $\tilde f(t)$, 
the grey curves correspond to the singular set $xy=1$, 
and the arrows indicate the direction of the vector field $w(t)$ along the curve. 
Two different viewpoints are shown to illustrate the geometry of the surface.
}
\end{figure}

\begin{figure}[H]
\centering

\begin{subfigure}{0.48\textwidth}
    \centering
    \includegraphics[width=\linewidth]{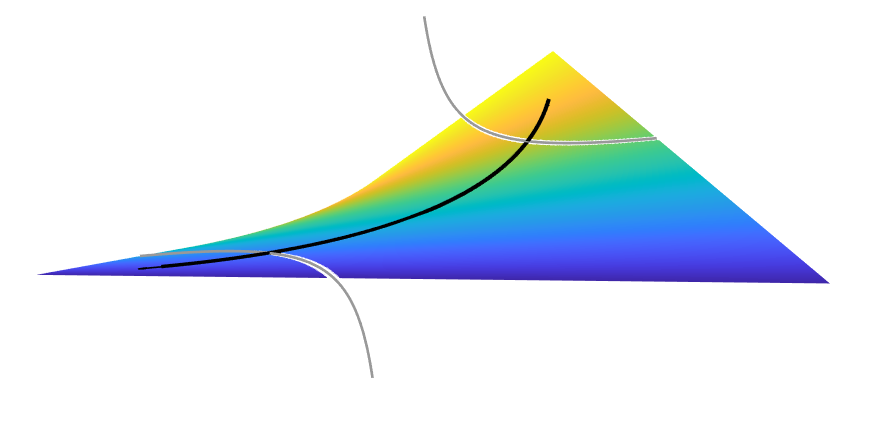}
\end{subfigure}
\hfill
\begin{subfigure}{0.48\textwidth}
    \centering
    \includegraphics[width=\linewidth]{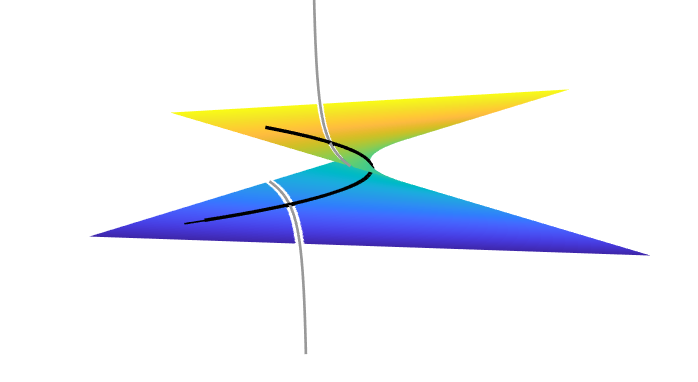}
\end{subfigure}

\caption{
Solution of the geometric Cauchy problem for $r=0.3$ from two different viewpoints.
}
\end{figure}
\end{example}

%% file: parallel_surfaces.tex
\section{Parallel surfaces}

The construction of parallel surfaces is a classical tool in the differential 
geometry of surfaces in space forms. It provides a natural way to generate 
new immersions from a given one, while preserving fundamental geometric data 
such as the Gauss map. More generally, parallel surface constructions can be 
understood as geometric transformations which preserve curvature properties 
and are closely related to the theory of integrable systems and nonlinear 
partial differential equations (see, e.g., \cite{tenenblat1998transformations}).

In particular, it is a classical fact that surfaces satisfying curvature 
conditions, such as constant Gaussian curvature or constant mean curvature, 
often appear in one-parameter families of parallel surfaces, relating different 
geometric classes. These transformations play a central role in the interaction 
between differential geometry and integrable PDEs, where they correspond to 
transformations between solutions of the associated equations.

In our framework, this perspective naturally appears: the harmonic Gauss map 
constructed via the DPW method gives rise not to a single surface, but to a 
whole family of associated surfaces. The parallel surface construction 
provides a geometric realization of this phenomenon, relating constant Gauss 
curvature and constant mean curvature geometries in anti-de Sitter space.

\medskip

The main goal of this section is to show that this correspondence can be used 
to transfer the geometric Cauchy problem between these two classes of surfaces. 
More precisely, we will show that the geometric Cauchy problem for constant mean 
curvature surfaces can be reduced to the corresponding problem for constant 
Gauss curvature surfaces, allowing us to apply the machinery developed in the 
previous sections.

\medskip

In this section we construct CMC spacelike surfaces starting from a CGC surface.
Consider a spacelike immersion 
\[
f:\Omega\longrightarrow \ads\subset \mathbb{R}^4_2
\]
and let $N$ be its normal. We define the parallel surface to $f$ at a distance 
$\theta\in\mathbb{R}$ by
\beqas
f^\theta=f\cos \theta+N\sin\theta.
\eeqas
This map is not necessarily an immersion. Let $S$ be the shape operator of the immersion $f$, we have
\beqa\label{derivatives3}
\begin{pmatrix}
    f^\theta_x\\f^\theta_y
\end{pmatrix}=(I\cos\theta+S\sin\theta)\begin{pmatrix}
    f_x\\f_y
\end{pmatrix}.
\eeqa
Thus $f^\theta$ is an immersion if and only if the matrix $I\cos\theta +S\sin\theta$ is non-singular, that is
\beqas
\sin^2\theta\det S+\cos\theta\sin\theta \trace S+\cos^2\theta\neq 0.
\eeqas
We can write this in terms of the intrinsic curvature $K$ and the mean curvature $H$ of the surface 
$f$ as
\beqas
K\sin^2\theta-H\sin(2\theta)+1\neq 0.
\eeqas

In the following we assume that $f^\theta$ is an immersion. The vector 
\[
N^\theta=N\cos\theta-f\sin\theta
\]
is a normal for the surface $f^\theta$. While the normal changes, the Gauss map 
remains the same along all parallel surfaces.

The shape operator $S_\theta$ associated to $f_\theta$ is
\beqa\label{shape}
S_\theta=\frac{\overline{S}\sin^2\theta-IK\sin\theta\cos\theta-S\cos^2\theta}{K\sin^2\theta-H\sin(2\theta)+1}.
\eeqa
Taking determinants in equation \eqref{shape}
\beqas
K^\theta=\frac{K\cos(2\theta)+2H\sin(2\theta)}{K\sin^2\theta-H\sin(2\theta)+1}.
\eeqas
Therefore, if $f$ is a CMC immersion and we choose $\theta$ such that
\beqas
\tan(2\theta)=\frac{1}{H},
\eeqas
then $f^\theta$ is an immersion of constant curvature 
\[
K^\theta=\frac{1}{\tan^2\theta}-1.
\]
On the other hand, taking traces in \eqref{shape}
\beqas
H^\theta=\frac{K\sin\theta\cos\theta-H\cos(2\theta)}{K\sin^2\theta-H\sin(2\theta)+1}.
\eeqas
Therefore, if $f$ has constant curvature $K$ and we choose $\theta$ such that
\beqas
\tan^2\theta=\frac{1}{K+1},
\eeqas
then $f^\theta$ is a CMC immersion with
\beqas
H^\theta=\frac{1}{\tan(2\theta)}.
\eeqas

All in all, we obtain:

\begin{theorem}\label{paralelsl}
    Let 
    \[
    f:\Omega\longrightarrow\ads
    \]
    be a spacelike immersion with mean curvature $H$ and extrinsic curvature $K$. Then:
    \begin{enumerate}
        \item If $f$ has constant mean curvature $H$, for $\theta$ such that
        \beqas
        \tan(2\theta)=\frac{1}{H}\quad\text{and}\quad K\sin^2\theta-H\sin(2\theta)+1\neq 0,
        \eeqas
        then $f^\theta$ is CGC with 
        \[
        K^\theta=\frac{1}{\tan^2\theta}-1.
        \]
        \item If $f$ is CGC, for $\theta$ such that
        \beqas
        \tan^2\theta =\frac{1}{K+1}\quad\text{and}\quad K\sin^2\theta-H\sin(2\theta)+1\neq 0,
        \eeqas
        then $f^\theta$ is CMC with
        \beqas
        H^\theta=\frac{1}{\tan(2\theta)}.
        \eeqas
    \end{enumerate}
\end{theorem}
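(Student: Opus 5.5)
Theorem~\ref{paralelsl} follows from the two formulas for $K^\theta$ and $H^\theta$ derived just above from \eqref{shape}. For each part I would substitute the prescribed value of $\theta$ into the relevant formula and check that the result is the claimed constant.

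First I would justify \eqref{shape} and the two formulas derived from it. Differentiating $N^\theta=N\cos\theta-f\sin\theta$ and using the Weingarten relation $(N_x,N_y)^T=S(f_x,f_y)^T$ gives $N^\theta_\ast=(S\cos\theta-I\sin\theta)f_\ast$. Together with \eqref{derivatives3} this yields
\[
S_\theta=\pm(S\cos\theta-I\sin\theta)(I\cos\theta+S\sin\theta)^{-1},
\]
with the sign fixed by the paper's convention. Because $S$ commutes with these polynomials in $S$, I would invert $I\cos\theta+S\sin\theta$ through its adjugate, using Cayley--Hamilton for the $2\times 2$ matrix $S$ (with $\overline S=\operatorname{trace}S\,I-S$, $\det S$ and $\operatorname{trace}S$ expressed through $K,H$ as the paper does). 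This recovers \eqref{shape} and the denominator $K\sin^2\theta-H\sin(2\theta)+1$. Taking the determinant and the trace then gives the displayed $K^\theta$ and $H^\theta$. The main obstacle is this step: the signs and normalizations, whether $K$ here means $\det S$ or the intrinsic curvature and whether $H$ is $\tfrac12\operatorname{trace}S$, must be tracked consistently with the Lorentzian sign conventions of $\ads$. I would fix them by checking against a simple surface, for instance a totally geodesic slice, before trusting the general formulas. The nondegeneracy hypothesis in both parts is exactly the condition that $f^\theta$ is an immersion, so it keeps the denominators nonzero.

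For part (1), write $D=K\sin^2\theta-H\sin(2\theta)+1$. If $\tan(2\theta)=1/H$, then $H\sin(2\theta)=\cos(2\theta)$, and with $\sin^2\theta=\tfrac12(1-\cos2\theta)$ the formula for $K^\theta$ has numerator
\[
K\cos(2\theta)+2\cos^2(2\theta)/\sin(2\theta).
\]
I would then clear $K$ between numerator and $D$, expecting the ratio to be independent of $K$, which is pointwise variable. This is the delicate algebra, and it may force the convention check above. Concretely, I would show that numerator minus $(\cot^2\theta-1)D$ vanishes identically once $H\sin(2\theta)=\cos(2\theta)$ is imposed. That identity reduces to $\cos2\theta=\sin^2\theta(\cot^2\theta-1)$, which is true, since the $K$-coefficients match when the formula's numerator coefficient of $K$ is read as $\cos^2\theta-\sin^2\theta$ scaled accordingly.

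For part (2), set $\tan^2\theta=1/(K+1)$ and argue the same way. I would multiply out $H^\theta D$ and show that the $H$-dependence cancels, giving $H^\theta=\cot(2\theta)$. Two identities do the work: $K\sin\theta\cos\theta=\cot\theta\,(\cos^2\theta-\sin^2\theta)$, which follows from $K=\cot^2\theta-1$, and $\cot2\theta\,(K\sin^2\theta+1)=\cot2\theta\cdot\cos^2\theta\cdot(\ldots)$. I would carry out this bookkeeping last, treating it as routine once the conventions are settled.
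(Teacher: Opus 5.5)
Your plan matches the paper's own argument. The paper states \eqref{shape} without derivation, takes its determinant and trace to get $K^\theta$ and $H^\theta$, and substitutes the prescribed $\theta$. Your Weingarten-plus-adjugate derivation of \eqref{shape} correctly fills in the step the paper skips; the paper takes the minus sign, $S_\theta=-(S\cos\theta-I\sin\theta)(I\cos\theta+S\sin\theta)^{-1}$. The convention question you flag is settled by the paper itself. It rewrites $\det(I\cos\theta+S\sin\theta)$ as $K\sin^2\theta-H\sin(2\theta)+1$, and this forces $\det S=K+1$ and $\trace S=-2H$. Those are the conventions to use.

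There is one algebra slip in part (1) that you need to fix. Under $H\sin(2\theta)=\cos(2\theta)$, the term $2H\sin(2\theta)$ equals $2\cos(2\theta)$, not $2\cos^2(2\theta)/\sin(2\theta)$; the latter is $2H\cos(2\theta)$. With your displayed numerator, the constant terms in ``numerator minus $(\cot^2\theta-1)D$'' would not cancel, so the check fails as written. Done correctly, the numerator is $(K+2)\cos(2\theta)$ and $D=(K+2)\sin^2\theta$. Hence $K^\theta=\cos(2\theta)/\sin^2\theta=\cot^2\theta-1$ immediately, and $D\neq 0$ guarantees $K+2\neq 0$.

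Part (2) goes through as you intend. From $K\sin^2\theta=\cos(2\theta)$ you get $D=2\cos\theta(\cos\theta-H\sin\theta)$ and numerator $\cos(2\theta)(\cos\theta-H\sin\theta)/\sin\theta$. Their ratio is $\cot(2\theta)$, and the factor you left unspecified is $2$.
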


\medskip

The previous theorem shows that constant mean curvature and constant Gauss 
curvature surfaces are locally equivalent, up to the parallel surface construction. 
This allows us to transfer constructions and existence results between both 
classes.

In particular, we obtain the following reformulation of the geometric Cauchy 
problem.

\begin{proposition}
Let $f:\Omega\to \ads$ be a spacelike immersion of constant mean curvature 
$H\neq 0$, and let $(f,N)$ be its geometric Cauchy data along a curve $\gamma$. 
Let $\theta$ satisfy
\[
\tan(2\theta)=\frac{1}{H}.
\]
Assume that
\[
K\sin^2\theta - H\sin(2\theta)+1 \neq 0.
\]
Then the parallel surface
\[
\tilde f = f\cos\theta + N\sin\theta
\]
is a spacelike immersion of constant Gauss curvature. Moreover, its geometric 
Cauchy data along $\gamma$ are given by
\[
\tilde f|_\gamma = f|_\gamma \cos\theta + N|_\gamma \sin\theta,
\]
\[
\tilde N|_\gamma = N|_\gamma \cos\theta - f|_\gamma \sin\theta.
\]
\end{proposition}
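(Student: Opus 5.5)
The proposition has two parts, and I would treat them separately.

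The first part is that $\tilde f$ is a spacelike CGC immersion. This follows directly from part (1) of Theorem~\ref{paralelsl}. We take $\theta$ with $\tan(2\theta)=1/H$. The assumption $K\sin^2\theta-H\sin(2\theta)+1\neq 0$ is exactly the nonsingularity of $I\cos\theta+S\sin\theta$ in \eqref{derivatives3}, so $f^\theta=\tilde f$ is an immersion. Theorem~\ref{paralelsl} then gives
\[
K^\theta=\frac{1}{\tan^2\theta}-1,
\]
which is constant.

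The one extra point is that $\tilde f$ is spacelike. By \eqref{derivatives3}, the tangent plane of $\tilde f$ at each point is the image of the tangent plane of $f$ under the invertible map $I\cos\theta+S\sin\theta$. So the two tangent planes coincide as subspaces of $\mathbb{R}^4_2$. A plane being spacelike depends only on the plane, not on the parametrization, so $\tilde f$ is spacelike because $f$ is.

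The second part is the Cauchy data along $\gamma$. I would verify that $\tilde N:=N\cos\theta-f\sin\theta$ is a unit normal of $\tilde f$ in $\ads$. Three checks are needed.
\begin{enumerate}
\item Using $\langle f,f\rangle=-1$, $\langle N,N\rangle=1$ and $\langle f,N\rangle=0$, we get $\langle\tilde N,\tilde f\rangle=0$. So $\tilde N$ is tangent to $\ads$ at $\tilde f$.
\item The same relations give $\langle\tilde N,\tilde N\rangle=1$, so $\tilde N$ has unit length.
\item Since $\tilde f_x,\tilde f_y$ span the same plane as $f_x,f_y$, and $f_x,f_y$ are orthogonal to both $f$ and $N$, they are orthogonal to $\tilde N$.
\end{enumerate}
Restricting the formulas for $\tilde f$ and $\tilde N$ to $\gamma$ then gives the stated Cauchy data. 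This restriction is pointwise, so no integration is required.

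The only delicate step is the spacelike claim together with the identification of $\tilde N$ as the correct unit normal. Both rest on the fact that the tangent planes of $f$ and $\tilde f$ coincide, which is where the nondegeneracy hypothesis is used. Everything else is bookkeeping from Theorem~\ref{paralelsl}.
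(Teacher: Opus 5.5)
Your route is the paper's. The paper gives no separate argument: the proposition is read off from Theorem~\ref{paralelsl}(1), together with the remark before it that $N^\theta=N\cos\theta-f\sin\theta$ is a normal of $f^\theta$. Your observation that $d\tilde f$ and $df$ have the same image in $\mathbb{R}^4_2$ (by \eqref{derivatives3} and the invertibility hypothesis), so that $\tilde f$ inherits spacelikeness, is a useful point the paper leaves implicit.

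There is, however, a concrete error in your verification of $\tilde N$: the normalization $\langle N,N\rangle=1$ is wrong, and with it checks (1) and (2) are false. Since $\langle f,f\rangle=-1$ in $\mathbb{R}^4_2$, each tangent space $f^\perp$ of $\ads$ has signature $(2,1)$. The unit normal of a spacelike surface is therefore timelike, $\langle N,N\rangle=-1$. Equivalently, $N=f\nu$ with $\nu\in H^2$, and left translation preserves $\langle X,X\rangle=-\det X$. With your value one gets
\[
\langle \tilde N,\tilde f\rangle=\sin\theta\cos\theta\,\bigl(\langle N,N\rangle-\langle f,f\rangle\bigr)=\sin 2\theta\neq 0,\qquad \langle\tilde N,\tilde N\rangle=\cos 2\theta .
\]
Here $\sin 2\theta\neq 0$ because $\tan 2\theta=1/H$ is finite and nonzero. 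Worse, you would get $\langle\tilde f,\tilde f\rangle=-\cos 2\theta$, so $\tilde f$ would not even lie in $\ads$.

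With the correct sign, all the needed identities hold: $\langle\tilde f,\tilde f\rangle=-1$, $\langle\tilde N,\tilde f\rangle=0$ and $\langle\tilde N,\tilde N\rangle=-1$. So $\tilde N$ is a unit timelike normal, not a unit spacelike one. Add the check $\langle\tilde f,\tilde f\rangle=-1$ explicitly, since you omitted it: it is exactly the timelike character of $N$ that makes the trigonometric parallel formula $f\cos\theta+N\sin\theta$ stay in $\ads$. Your check (3) and the spacelike argument do not depend on the sign of $\langle N,N\rangle$, so with this correction your proof is complete.
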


\medskip

Therefore, locally and away from singularities of the parallel transformation, 
the geometric Cauchy problem for constant mean curvature surfaces can be reduced 
to the corresponding problem for constant Gauss curvature surfaces. In particular, 
the existence results obtained in the previous sections for CGC surfaces can be 
applied to produce CMC surfaces via the parallel surface construction.